\pdfoutput=1
\documentclass[a4paper,12pt]{amsart} 

\usepackage[DIV=15]{typearea} 

\usepackage[UKenglish]{babel}
\usepackage[T1]{fontenc}

\usepackage{amsmath}
\usepackage{amsthm}
\usepackage{amsfonts}
\usepackage{amssymb}
\usepackage[neverdecrease]{paralist}
\usepackage{enumitem}
\usepackage{dsfont}
\usepackage{ifthen}
\usepackage{twoopt}
\usepackage{mathtools}
\usepackage{etoolbox}
\usepackage{xspace}

\usepackage{microtype}
\newcommand{\mathdcl}[1]{{\ifstrequal{#1}{l}{l}{\textup{#1}}}}

\newcommand{\RR}{\mathds{R}}

\newcommand{\CC}{\mathds{C}}
\newcommand{\one}{\mathds{1}}
\newcommand{\setmid}{:}
\newcommand{\m}{\mathdcl{m}}
\newcommand{\reg}{\mathdcl{reg}}
\newcommand{\sing}{\mathdcl{s}}
\newcommand{\cpt}{\mathdcl{c}}
\newcommand{\loc}{\mathdcl{loc}}

\newcommand{\form}[1]{{\mathfrak{#1}}}
\newcommand{\formdom}[1]{D(\form{#1})}
\DeclareMathOperator{\formreal}{\mathfrak{Re}}
\DeclareMathOperator{\formimag}{\mathfrak{Im}}
\renewcommand{\Re}{\operatorname{Re}}
\renewcommand{\Im}{\operatorname{Im}}

\newcommand{\conj}[1]{\overline{#1}}
\DeclarePairedDelimiter\norm{\lVert}{\rVert}
\DeclarePairedDelimiter\abs{\lvert}{\rvert}

\newcommand{\set}[3][auto]{{
\ifthenelse{\equal{#1}{auto}}{\left\{#2\setmid #3\right\}}{}
\ifthenelse{\equal{#1}{b}}{\bigl\{#2\setmid #3\bigr\}}{}
\ifthenelse{\equal{#1}{B}}{\Bigl\{#2\setmid #3\Bigr\}}{}
}}
\newcommand{\scalar}[3][auto]{{
\ifthenelse{\equal{#1}{auto}}{\left(#2\mkern3mu{\mid}\mkern3mu #3\right)}{}
\ifthenelse{\equal{#1}{b}}{\bigl(#2\mkern3mu{\mid}\mkern3mu #3\bigr)}{}
\ifthenelse{\equal{#1}{B}}{\Bigl(#2\mkern3mu{\bigm|}\mkern3mu #3\Bigr)}{}
}}
\newcommand{\ascalar}[3][auto]{{
\ifthenelse{\equal{#1}{auto}}{\left<#2, #3\right>}{}
\ifthenelse{\equal{#1}{b}}{\bigl<#2, #3\bigr>}{}
\ifthenelse{\equal{#1}{B}}{\Bigl<#2, #3\Bigr>}{}
}}

\newcommand{\Linop}{\mathcal{L}}

\newcommand{\Cinfc}[1][\Omega]{\ensuremath{C^\infty_\cpt(#1)}}
\newcommand{\Lone}[1][\Omega]{\ensuremath{L^1(#1)}}
\newcommand{\Ltwo}[1][\Omega]{\ensuremath{L^2(#1)}}
\newcommand{\Linf}[1][\Omega]{\ensuremath{L^\infty(#1)}}
\newcommand{\Loneloc}[1][\Omega]{\ensuremath{L^1_\loc(#1)}}
\newcommand{\Linfloc}[1][\Omega]{\ensuremath{L^\infty_\loc(#1)}}
\newcommand{\Hone}[1][\Omega]{\ensuremath{{H^1(#1)}}}

\newcommand{\restrict}[2]{\ensuremath{#1|_{#2}}}
\newcommand{\meas}[2][Lebesgue]{
\ifthenelse{\equal{#1}{Lebesgue}}{\abs{#2}}{{#1(#2)}}
}
\newcommand{\emphdef}[1]{\textbf{\boldmath #1\unboldmath}}

\newenvironment{romanenum}{%
\begin{enumerate}[label=\textup{(\roman*)},ref=\textup{(\roman*)},itemsep=0em,topsep=1ex]%
}{\end{enumerate}}

\makeatletter
\newcommand\sauter@restoreitemindent{%
\itemindent=\sauter@saveditemindent
\def\sauter@restoreitemindent{}}
\newdimen\sauter@savedparindent

\newenvironment{parenum}[1][]%
{\sauter@savedparindent=\parindent%
\ifthenelse{\equal{#1}{}}{\asparaenum}{\asparaenum[#1]}%
\edef\sauter@saveditemindent{\the\itemindent}%
\advance\itemindent-\sauter@savedparindent
\patchcmd{\@item}{\ignorespaces}{\sauter@restoreitemindent\ignorespaces}{}{}}
{\endasparaenum}
\makeatother

\theoremstyle{plain}
\newtheorem{theorem}{Theorem}[section]
\newtheorem{proposition}[theorem]{Proposition}

\newtheorem{lemma}[theorem]{Lemma}
\theoremstyle{definition}
\newtheorem{example}[theorem]{Example}
\newtheorem{remark}[theorem]{Remark}
\newtheorem{definition}[theorem]{Definition}

\hfuzz=4pt
\renewcommand{\jmath}{j}

\title[The regular part of second-order differential sectorial forms]{The regular part of second-order differential sectorial forms\\ with lower-order terms}

\author{\sc A.F.M. ter~Elst}
\address{A.F.M. ter~Elst\\Department of Mathematics\\The University of Auckland\\Private bag 92019\\Auckland 1142\\New Zealand}
\email{terelst@math.auckland.ac.nz}

\author[M. Sauter]{\sc Manfred Sauter}
\address{Manfred Sauter\\Department of Mathematics\\The University of Auckland\\Private bag 92019\\Auckland 1142\\New Zealand}
\email{m.sauter@auckland.ac.nz}

\date{March 2013}

\keywords{Differential sectorial forms, non-closable forms, regular part, degenerate elliptic operators}
\subjclass[2000]{Primary: 47A07; Secondary: 35J70} 

\begin{document}

\begin{abstract}
We present a formula for the regular part of a sectorial form that represents a general linear 
second-order differential expression that may include lower-order terms. 
The formula is given in terms of the original coefficients. It shows that
the regular part is again a differential sectorial form and allows
to characterise when also the singular part is sectorial.
While this generalises earlier results on pure second-order differential expressions,
it also shows that lower-order terms truly introduce new behaviour.
\end{abstract}

\begingroup
\makeatletter
\patchcmd{\@settitle}{\uppercasenonmath\@title}{\large}{}{}
\patchcmd{\@setauthors}{\MakeUppercase}{}{}{}
\makeatother
\maketitle
\endgroup

\section{Introduction}

We study the regular part of a differential sectorial form that represents a
general linear second-order differential expression that may include lower-order terms. 
Loosely speaking, such a differential expression has the form
\[
    -\sum_{k,l=1}^d \partial_lc_{kl}\partial_k + \sum_{k=1}^d b_k\partial_k - \sum_{k=1}^d \partial_k d_k + c_0.
\]
We obtain a formula for the regular part of the corresponding differential sectorial form
from which it follows that
the regular part is again a differential sectorial form. Furthermore, this formula
allows to characterise when the singular part is sectorial and when the 
regular part of the real part is equal to the real part of the regular part.

The regular part for positive symmetric forms was introduced in~\cite{bSim5} and recently generalised to sectorial forms in~\cite{AtE12:sect-form}.
In~\cite{Vog1} a formula was obtained for the regular part of a positive symmetric form representing a pure second-order differential expression.
We make use of the techniques introduced there, as we already did in a previous paper~\cite{tES2011}.

A different approach to the regular part of a positive symmetric form based on parallel sums is
given in~\cite{HSdS09}. Furthermore, we point out that in the context of
nonlinear phenomena and discontinuous media the relaxation of a functional
is a notion that in the linear setting corresponds to the regular part of positive symmetric forms~\cite{Mosco94,Braides02,DalMaso93}.

The current article generalises our exposition for differential sectorial forms of pure second order as given in~\cite{tES2011}. 
We note that here we impose the same mild conditions on the highest-order coefficients. 
In particular, we cover a large class of forms associated with important degenerate elliptic differential operators.
The lower-order terms introduce new phenomena. Even if $c_0=0$, then the regular part of the corresponding
differential sectorial form may have a non-vanishing zeroth-order term.
Moreover, this term can cause a shift in the vertex of the singular part of the form.
We will present an example where this happens.

\section{Preliminaries and definition of considered form}\label{sec:def-form}

We will adopt the notation as in~\cite{tES2011}. For the reader's convenience, we recall the following definitions.
Suppose $H$ is a Hilbert space, $D(\form{b})$ is a linear subspace of $H$ and
$\form{b}\colon D(\form{b})\times D(\form{b})\to\CC$ is a sesquilinear form with form domain $D(\form{b})$.
We say that the form $\form{b}$ is \emphdef{sectorial} if
there exist a \emphdef{vertex} $\gamma\in\RR$ and a \emphdef{semi-angle} $\theta\in[0,\tfrac{\pi}{2})$ such that
\[
    \form{b}(u,u)-\gamma \, \norm{u}^2_H\in \Sigma_\theta
\]
for all $u\in D(\form{b})$, where
$\Sigma_\theta\coloneqq \set{z\in\CC}{\text{$z=0$ or $\abs{\arg(z)}\le\theta$}}$.
The \emphdef{real part} $\formreal\form{b}\colon D(\form{b})\times D(\form{b})\to\CC$
of a sesquilinear form $\form{b}$ is defined by $(\formreal\form{b})(u,v)=\frac{1}{2}\bigl(\form{b}(u,v) + \conj{\form{b}(v,u)}\bigr)$, 
and similarly for the \emphdef{imaginary part}.

For a sectorial form $\form{b}$ with vertex $\gamma$ we define the norm $\norm{\cdot}_\form{b}$ on $D(\form{b})$ by setting
$\norm{u}^2_\form{b}\coloneqq (\formreal \form{b})(u,u) + (1-\gamma)\norm{u}^2_H$.
A sectorial form $\form{b}$ is called \emphdef{closed} if $(D(\form{b}),\norm{\cdot}_\form{b})$ is a Hilbert space.
A sectorial form is called \emphdef{closable} if it has a closed extension.
Our main reason to study sectorial forms is that with every densely defined sectorial form $\form{b}$ one can associate an \m-sectorial operator $B$ in a natural way by~\cite[Theorem~1.1]{AtE12:sect-form}.

By~\cite[Proposition~2.3]{tES2011} the following definition of the regular part makes sense.
\begin{definition}
Let $\form{b}$ be a densely defined sectorial form. Let $B$ be the $\m$-sectorial operator associated with $\form{b}$.
Then there exists a unique closable sectorial form $\form{b}_\reg$ with form domain $D(\form{b}_\reg)=D(\form{b})$ such that $B$ is associated with $\form{b}_\reg$.
We call $\form{b}_\reg$ the \emphdef{regular part} of $\form{b}$.
The \emphdef{singular part} of $\form{b}$ is the form $\form{b}_\sing=\form{b}-\form{b}_\reg$.
\end{definition}
A useful formula for the regular part of a densely defined sectorial form is given in~\cite[Theorem~2.6]{tES2011}. 

We now introduce the form $\form{a}$ that is considered throughout this paper.
Let $\Omega\subset\RR^d$ be open. Let $H=L^2(\Omega)$ and suppose $D(\form{a})$ is a vector subspace of $H$ that contains $\Cinfc$. Suppose that $\partial_ku\in\Loneloc$ for all $k\in\{1,\dotsc,d\}$
and $u\in D(\form{a})$.
For all $k,l\in\{1,\dotsc,d\}$, let $c_{kl}$, $b_k$, $d_k$ and $c_0$ be measurable functions from $\Omega$ into $\CC$.
Suppose that $c_0\in\Linf$.
Define $C\colon\Omega\to\CC^{d\times d}$ by $C(x)=(c_{kl}(x))_{k,l=1}^d$ and $b,d\colon\Omega\to\CC^d$ by $b(x)=(b_k(x))_{k=1}^d$ and $d(x)=(d_k(x))_{k=1}^d$.
Suppose that $C\nabla u\cdot\nabla u\in\Lone$ for all $u\in D(\form{a})$, where $\zeta\cdot\eta$ denotes the Euclidean inner product of $\zeta,\eta\in\CC^d$.
Moreover, suppose there exists a $\theta\in[0,\pi/2)$ such that $C(x)\xi\cdot\xi\in\Sigma_\theta$ for all $\xi\in\CC^d$ and $x\in\Omega$.
Define the measurable function $A\colon\Omega\to\CC^{d\times d}$ by $A(x)=\frac{1}{2}(C(x)+C(x)^*)$. Note that $A(x)$ is a positive semi-definite Hermitian matrix for all $x\in\Omega$.
Hence $A(x)$ admits a unique positive semi-definite square root $A^{1/2}(x)$ for all $x\in\Omega$.
Furthermore, suppose there exists a $K>0$ such that
\[
    \abs{\conj{b(x)}\cdot\xi} \le K\norm{A^{1/2}(x)\xi}_{\CC^d}\qquad\text{and}\qquad\abs{d(x)\cdot\xi}\le K\norm{A^{1/2}(x)\xi}_{\CC^d}
\]
for all $\xi\in\CC^d$ and $x\in\Omega$.
The map $x\mapsto A^{1/2}(x)$ is measurable from $\Omega$ into $\CC^{d\times d}$ by~\cite[Lemma~4.1]{tES2011}.
\begin{lemma}\label{lem:XY}
There exist measurable, bounded maps $X,Y\colon \Omega\to\CC^{d}$ such that
$A^{1/2}(x) X(x)=\conj{b(x)}$ and $A^{1/2}(x)Y(x)=d(x)$ for all $x\in\Omega$.
\end{lemma}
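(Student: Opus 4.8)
The plan is to take $X(x)$ and $Y(x)$ to be the least-norm solutions of the two pointwise linear systems, realised through the Moore--Penrose pseudoinverse; I describe the construction of $X$, that of $Y$ being identical with $d$ in place of $\conj{b}$.

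First I would settle pointwise solvability. Fix $x\in\Omega$: if $\xi\in\CC^d$ satisfies $A^{1/2}(x)\xi=0$, then the first hypothesised inequality forces $\conj{b(x)}\cdot\xi=0$, so $\conj{b(x)}\perp\ker A^{1/2}(x)$. As $A^{1/2}(x)$ is Hermitian and $\CC^d$ is finite-dimensional, $(\ker A^{1/2}(x))^\perp=\operatorname{ran}A^{1/2}(x)$, hence $\conj{b(x)}\in\operatorname{ran}A^{1/2}(x)$. Writing $M^\dagger$ for the Moore--Penrose pseudoinverse, I would then set $X(x)\coloneqq A^{1/2}(x)^\dagger\conj{b(x)}$: since $A^{1/2}(x)A^{1/2}(x)^\dagger$ is the orthogonal projection onto $\operatorname{ran}A^{1/2}(x)$ and $\conj{b(x)}$ lies there, this $X(x)$ solves $A^{1/2}(x)X(x)=\conj{b(x)}$, and $X(x)\in\operatorname{ran}(A^{1/2}(x)^\dagger)=\operatorname{ran}A^{1/2}(x)$.

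Next I would prove the uniform bound $\norm{X(x)}_{\CC^d}\le K$. Given $\eta\in\CC^d$, choose $\zeta\in\CC^d$ with $A^{1/2}(x)\zeta$ equal to the orthogonal projection of $\eta$ onto $\operatorname{ran}A^{1/2}(x)$, so $\norm{A^{1/2}(x)\zeta}_{\CC^d}\le\norm{\eta}_{\CC^d}$. Using $X(x)\in\operatorname{ran}A^{1/2}(x)$ and that $A^{1/2}(x)$ is Hermitian,
\[
    X(x)\cdot\eta = X(x)\cdot\bigl(A^{1/2}(x)\zeta\bigr) = \bigl(A^{1/2}(x)X(x)\bigr)\cdot\zeta = \conj{b(x)}\cdot\zeta,
\]
so that $\abs{X(x)\cdot\eta}\le K\norm{A^{1/2}(x)\zeta}_{\CC^d}\le K\norm{\eta}_{\CC^d}$ by the first hypothesis. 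Choosing $\eta=X(x)$ yields $\norm{X(x)}_{\CC^d}\le K$ for every $x$, and likewise $\norm{Y(x)}_{\CC^d}\le K$; thus $X$ and $Y$ are bounded.

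For measurability: $x\mapsto A^{1/2}(x)$ is measurable by~\cite[Lemma~4.1]{tES2011}, and at every $x$ one has $A^{1/2}(x)^\dagger=\lim_{\varepsilon\downarrow0}(A(x)+\varepsilon I)^{-1}A^{1/2}(x)$ (immediate from the spectral decomposition of $A(x)$), a pointwise limit of measurable matrix-valued maps; hence $x\mapsto A^{1/2}(x)^\dagger$ is measurable, and as $b$ and $d$ are measurable so are $X$ and $Y$. I expect the only points requiring genuine care to be the passage from the scalar inequalities to the range inclusions $\conj{b(x)},\,d(x)\in\operatorname{ran}A^{1/2}(x)$ and the measurability of $x\mapsto A^{1/2}(x)^\dagger$; everything else is routine linear algebra.
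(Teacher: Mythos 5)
Your proposal is correct and follows essentially the same route as the paper: the matrix $g(A(x))$ with $g(t)=t^{-1/2}$ for $t>0$ and $g(0)=0$ used there is exactly the Moore--Penrose pseudoinverse $A^{1/2}(x)^\dagger$, so your $X(x)=A^{1/2}(x)^\dagger\conj{b(x)}$ coincides with the paper's $X(x)=g(A(x))\conj{b(x)}$. Your verifications of boundedness and measurability are minor variants of the ones given there: the paper bounds $\abs{X(x)\cdot\xi}=\abs{\conj{b(x)}\cdot g(A(x))\xi}$ using that $A^{1/2}(x)g(A(x))$ is an orthogonal projection, and it cites the measurability of $x\mapsto g(A(x))$ from the proof of \cite[Lemma~4.1]{tES2011} rather than exhibiting the limit $\lim_{\varepsilon\downarrow 0}(A(x)+\varepsilon I)^{-1}A^{1/2}(x)$ explicitly.
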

\begin{proof}
Define $g\colon [0,\infty)\to[0,\infty)$ by $g(t)=t^{-1/2}$ if $t>0$ and $g(0)=0$.
Then, as in the proof of~\cite[Lemma~4.1]{tES2011}, the map $x\mapsto g(A(x))$ is measurable from $\Omega$ into $\CC^{d\times d}$.
Therefore also the map $X\colon\Omega\to\CC^{d}$ defined by $X(x)=g(A(x))\conj{b(x)}$ is measurable.
Moreover,
\[
    \abs{X(x)\cdot\xi}=\abs{\conj{b(x)}\cdot g(A(x))\xi} \le K\norm{A^{1/2}(x)g(A(x))\xi}_{\CC^d} \le K\norm{\xi}_{\CC^d}.
\]
This proves that $X$ is bounded. 
Then arguing as in the proof of the first displayed formula on page~918 in~\cite{tES2011}, one deduces $A^{1/2}(x)X(x)=\conj{b(x)}$.

Existence and boundedness of $Y$ is proved similarly.
\end{proof}

Let $Z\colon\Omega\to\CC^{d\times d}$ be as in~\cite[Lemma~4.1]{tES2011}.
Then $Z$ is bounded, $Z(x)$ is Hermitian and $A^{1/2}(x)(I+iZ(x))A^{1/2}(x)=C(x)$ for all $x\in\Omega$.
Finally, define the form $\form{a}\colon D(\form{a})\times D(\form{a})\to\CC$ by
\begin{equation}\label{eq:def-a}
    \form{a}(u,v)=\int_\Omega\sum_{k,l=1}^d c_{kl}(\partial_k u)\conj{\partial_l v} 
      + \int_\Omega\sum_{k=1}^d b_k(\partial_k u)\conj{v} 
      + \int_\Omega\sum_{k=1}^d d_k u\,\conj{\partial_k v}
      + \int_\Omega c_0 u\conj{v}.
\end{equation}
Let $X$ and $Y$ be as in Lemma~\ref{lem:XY}.
This allows us to write
\begin{equation}\label{eq:simpform}
    \form{a}(u,v)=\scalar{(I+iZ)A^{1/2}\nabla u}{A^{1/2}\nabla v}
      + \scalar{A^{1/2}\nabla u}{vX}
      + \scalar{uY}{A^{1/2}\nabla v}
      + \scalar{c_0u}{v}
\end{equation}
for all $u,v\in D(\form{a})$. In particular, it follows that the first-order terms in~\eqref{eq:def-a} are indeed integrable.

Since $X$, $Y$ and $Z$ are bounded, the next lemma follows from~\eqref{eq:simpform}.
\begin{lemma}
The form $\form{a}$ is a sectorial form in $H$.
\end{lemma}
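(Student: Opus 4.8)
The plan is to read the sectoriality estimate straight off the representation~\eqref{eq:simpform}, in which the boundedness of $X$, $Y$, $Z$ and $c_0$ is already incorporated. Fix $u\in D(\form{a})$ and abbreviate $p\coloneqq A^{1/2}\nabla u$, which is square-integrable since $\norm{p}^2=\int_\Omega A\nabla u\cdot\nabla u\le\int_\Omega\abs{C\nabla u\cdot\nabla u}<\infty$ by the standing hypothesis $C\nabla u\cdot\nabla u\in\Lone$. With this notation, setting $v=u$ in~\eqref{eq:simpform} gives
\[
    \form{a}(u,u)=\scalar{(I+iZ)p}{p}+\scalar{p}{uX}+\scalar{uY}{p}+\scalar{c_0u}{u}.
\]

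Next I would split into real and imaginary parts and estimate the four summands. Since $Z(x)$ is Hermitian, $\scalar{Zp}{p}$ is real, so $\scalar{(I+iZ)p}{p}=\norm{p}^2+i\scalar{Zp}{p}$; its real part is $\norm{p}^2$ and its imaginary part $\scalar{Zp}{p}$ has modulus at most $\norm{Z}_\infty\norm{p}^2$, where $\norm{\cdot}_\infty$ is the essential supremum norm. For the two first-order cross terms I would combine the Cauchy--Schwarz inequality with $\norm{uX}\le\norm{X}_\infty\norm{u}$ and $\norm{uY}\le\norm{Y}_\infty\norm{u}$ and then use Young's inequality to absorb the $\norm{p}$-factor, e.g.\ $\norm{X}_\infty\norm{p}\,\norm{u}\le\tfrac14\norm{p}^2+\norm{X}_\infty^2\norm{u}^2$; the zeroth-order term is dominated by $\norm{c_0}_\infty\norm{u}^2$. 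Writing $M\coloneqq\norm{X}_\infty^2+\norm{Y}_\infty^2+\norm{c_0}_\infty$, this yields
\[
    \Re\form{a}(u,u)\ge\tfrac12\norm{p}^2-M\norm{u}^2
    \qquad\text{and}\qquad
    \abs{\Im\form{a}(u,u)}\le\bigl(\norm{Z}_\infty+\tfrac12\bigr)\norm{p}^2+M\norm{u}^2.
\]

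Finally I would take the vertex $\gamma\coloneqq-2M$ and set $w\coloneqq\form{a}(u,u)-\gamma\norm{u}^2$. The first estimate gives $\Re w=\Re\form{a}(u,u)+2M\norm{u}^2\ge\tfrac12\norm{p}^2+M\norm{u}^2\ge0$, hence both $\norm{p}^2\le2\Re w$ and $M\norm{u}^2\le\Re w$. Substituting these into the second estimate yields $\abs{\Im w}=\abs{\Im\form{a}(u,u)}\le(2\norm{Z}_\infty+2)\Re w$, so $w\in\Sigma_\theta$ with $\theta\coloneqq\arctan(2\norm{Z}_\infty+2)\in[0,\tfrac\pi2)$. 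As $u\in D(\form{a})$ was arbitrary, $\form{a}$ is sectorial with vertex $\gamma$ and semi-angle $\theta$. No genuine obstruction arises here; the only point needing a little care is the bookkeeping of constants — in particular shifting the vertex by $2M$ rather than $M$, so that the leftover $\norm{u}^2$-terms are absorbed into $\Re w$ instead of spoiling the angle.
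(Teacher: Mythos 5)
Your proof is correct and is precisely the argument the paper has in mind: the paper gives no written proof beyond the remark that the lemma ``follows from~\eqref{eq:simpform} since $X$, $Y$ and $Z$ are bounded'', and your Cauchy--Schwarz/Young bookkeeping with vertex $-2M$ and semi-angle $\arctan(2\norm{Z}_\infty+2)$ is a clean, complete execution of exactly that idea.
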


Let $\form{b}$ be a sesquilinear form in $\Ltwo$.
If $\form{b}$ is equal to the form $\form{a}$ for
an appropriate choice of the coefficient functions $c_{k,l}$, $b_k$, $d_k$ and $c_0$ with $k,l\in\{1,\dotsc,d\}$,
then we shall call $\form{b}$ a \emphdef{differential sectorial form}.
The main result of this paper establishes that, under suitable mild conditions
which we next introduce, the regular part of $\form{a}$ is also a differential sectorial form.

As in~\cite{tES2011}, we introduce the following two conditions.
We say that
$\form{a}$ satisfies 
\emphdef{Condition~\upshape{(L)}} if 
\begin{romanenum}
\item $\formdom{a}\cap\Linf$ is invariant under multiplication with $\Cinfc$ functions, 
\item there exists a $\psi\in C^1_\mathdcl{b}(\RR)$ such that $\psi(0)=0$, 
$\psi'(0)=1$, and $\psi\circ(\Re u)+i \psi\circ(\Im u)\in\formdom{a}$
for all $u\in\formdom{a}$, and,
\item
$c_{kl} + \conj{c_{lk}}$ is real valued for all $k,l \in \{ 1,\dotsc,d \} $.
\end{romanenum}
This is the condition on the form $\form{a}$ in \cite{Vog1}, adapted to complex
vector spaces.
Furthermore, we say that $\form{a}$ satisfies \emphdef{Condition~\upshape{(B)}} if
\begin{romanenum}
\item $\formdom{a}$ is invariant under multiplication with $\Cinfc$ functions, and,
\item $c_{kl} \in \Linfloc$ for all $k,l \in \{ 1,\dotsc,d \} $.
\end{romanenum}

\section{The formula for the regular part}

In this section we derive a formula for the regular part of the form $\form{a}$ in Section~\ref{sec:def-form}.
To this end, we assume that $\form{a}$ satisfies Condition~\upshape{(L)} or~\upshape{(B)}.
It will be immediate from the obtained formula that both the regular
and singular part of $\form{a}$ continue to be \emph{differential} sectorial forms. 
Note that this is not at all clear as the definition of the regular part is rather abstract.

We will refine the methods used in~\cite[Section~4]{tES2011}.
We first reformulate some of the results in~\cite[Section~2]{tES2011} in a convenient way for the current setting.

If $V_0$ is a vector space with a semi-inner product, then there exist a Hilbert space $V$ and an isometric linear map $\Phi\colon V_0\to V$
such that $\Phi(V_0)$ is dense in $V$. Then $(V,\Phi)$ is unique up to unitary equivalence. We call $(V,\Phi)$ the \emphdef{completion} of $V_0$.
Moreover, every uniformly continuous map on $V_0$ has a unique continuous extension to $V$ in the obvious way.
\begin{proposition}\label{prop:areg-abstract}
Let $H$ be a Hilbert space and $\form{a}$ be a densely defined sectorial form in $H$.
Let $(V,\Phi)$ be the completion of $(\formdom{a},\norm{\cdot}_{\form{a}})$.
Let $\form{\tilde{a}}\colon V\times V\to\CC$ and $\tilde{j}\colon V\to H$ be the continuous extensions of 
$\form{a}$ and of the embedding of $\formdom{a}$ into $H$, respectively. Let $\form{\tilde{h}}$ be
the real part of $\form{\tilde{a}}$.
Let $\pi_1$ be the orthogonal projection of $V$ onto $V_1\coloneqq\ker\tilde{j}$ and let $\pi_2=I_{V}-\pi_1$.
Then there exists a unique operator $T\in\Linop(V,V_1)$ such that
\[
    (\formimag\form{\tilde{a}})(u,v)=\form{\tilde{h}}(Tu,v)
\]
for all $u\in V$ and $v\in V_1$. Moreover, $T_{11}\coloneqq\restrict{T}{V_1}\in\Linop(V_1)$ is
self-adjoint. Define the operator $\Pi\in\Linop(V)$ by
\begin{equation}\label{eq:PVa}
    \Pi u = \pi_2 u - i(I_{V_1} + i T_{11})^{-1}T\pi_2 u.
\end{equation}
Then the regular part of $\form{a}$ is given by
\begin{equation}\label{eq:areg-gen}
    \form{a}_\reg(u,v) = \form{\tilde{a}}(\Pi\Phi(u),\Pi\Phi(v))
\end{equation}
for all $u,v\in\formdom{a}$.
\end{proposition}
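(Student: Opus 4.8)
The plan is to write down explicitly the form that $\form{a}_\reg$ must be and then verify its defining properties from the ground up: by the definition of the regular part together with \cite[Proposition~2.3]{tES2011}, it suffices to exhibit \emph{one} densely defined closable sectorial form $\form{b}$ with $\formdom{b}=\formdom{a}$ whose associated $\m$-sectorial operator is $B$, and the candidate is $\form{b}(u,v)\coloneqq\form{\tilde{a}}(\Pi\Phi(u),\Pi\Phi(v))$. First I would record the identity
\[
  \form{\tilde{h}}(v,w)=\scalar{v}{w}_V-(1-\gamma)\scalar{\tilde{j}v}{\tilde{j}w}_H\qquad(v,w\in V),
\]
obtained by passing to the limit in $\norm{u}^2_\form{a}=(\formreal\form{a})(u,u)+(1-\gamma)\norm{u}^2_H$ on the diagonal and then polarising, both sides being Hermitian. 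In particular $\form{\tilde{h}}$ restricts to the inner product of $V$ on $V_1\times V_1$, and $\form{\tilde{h}}(v,w)=\scalar{\pi_1 v}{w}_V$ whenever $w\in V_1$. Since $\form{a}$ is sectorial, $\form{\tilde{a}}$ and hence $\formimag\form{\tilde{a}}$ are bounded on $V$; for fixed $u\in V$, applying the Riesz representation theorem to the bounded functional $w\mapsto(\formimag\form{\tilde{a}})(u,w)$ on $V_1$ yields a unique $Tu\in V_1$ with $(\formimag\form{\tilde{a}})(u,w)=\scalar{Tu}{w}_V=\form{\tilde{h}}(Tu,w)$ for all $w\in V_1$, and testing with $w=Tu$ gives $T\in\Linop(V,V_1)$. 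The relation $\scalar{T_{11}u}{w}_V=(\formimag\form{\tilde{a}})(u,w)$ for $u,w\in V_1$ together with the Hermitian symmetry of $\formimag\form{\tilde{a}}$ shows that $T_{11}$ is self-adjoint; being bounded and self-adjoint, $T_{11}$ has real spectrum, so $I_{V_1}+iT_{11}$ is invertible with bounded inverse and $\Pi$ of~\eqref{eq:PVa} is a well-defined operator in $\Linop(V)$.

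Next I would read off the geometric meaning of $\Pi$. A direct computation with~\eqref{eq:PVa} shows $\Pi v-v\in V_1$ and $\pi_1(\Pi v)+iT(\Pi v)=0$ for all $v\in V$; by the first step the latter is equivalent to $\form{\tilde{a}}(\Pi v,w)=0$ for all $w\in V_1$. Setting $W\coloneqq\set{y\in V}{\form{\tilde{a}}(y,w)=0\text{ for all }w\in V_1}$, this gives $\operatorname{ran}\Pi\subseteq W$. Conversely $W\cap V_1=\{0\}$, since $y\in W\cap V_1$ forces $\form{\tilde{a}}(y,y)=0$, hence $\norm{y}^2_V=\form{\tilde{h}}(y,y)=0$; combined with $\Pi v-v\in V_1$ this also yields $\Pi|_W=I_W$. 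Thus $\Pi$ is a bounded projection with $\ker\Pi=V_1$, $\operatorname{ran}\Pi=W$ and $V=W\oplus V_1$, and $\tilde{j}\Pi=\tilde{j}$ because $\Pi v-v\in V_1=\ker\tilde{j}$.

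Now define $\form{b}(u,v)=\form{\tilde{a}}(\Pi\Phi(u),\Pi\Phi(v))$ on $\formdom{a}\times\formdom{a}$. From $\tilde{j}\Pi\Phi(u)=\tilde{j}\Phi(u)=u$ we get $\form{b}(u,u)-\gamma\norm{u}^2_H=\form{\tilde{a}}(\Pi\Phi(u),\Pi\Phi(u))-\gamma\norm{\tilde{j}\Pi\Phi(u)}^2_H\in\Sigma_\theta$, so $\form{b}$ is sectorial with vertex $\gamma$, and $\norm{u}^2_\form{b}=\form{\tilde{h}}(\Pi\Phi(u),\Pi\Phi(u))+(1-\gamma)\norm{u}^2_H=\norm{\Pi\Phi(u)}^2_V$. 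Hence $u\mapsto\Pi\Phi(u)$ is an isometry from $(\formdom{a},\norm{\cdot}_\form{b})$ onto a dense subspace of the Hilbert space $(W,\norm{\cdot}_V)$. Since $\tilde{j}|_W$ is injective (because $W\cap V_1=\{0\}$), the sectorial form on $\tilde{j}(W)\subseteq H$ obtained by transporting $\form{\tilde{a}}|_{W\times W}$ along $\tilde{j}|_W$ is closed (it is isometric, via $\tilde{j}|_W$, to $(W,\norm{\cdot}_V)$, which is complete), densely defined (as $\tilde{j}(W)=\tilde{j}(V)$ is dense in $H$) and restricts to $\form{b}$ on $\formdom{a}$; therefore $\form{b}$ is closable with $\formdom{b}=\formdom{a}$.

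Finally I would identify the $\m$-sectorial operator $B'$ associated with $\form{b}$. Unwinding the representation through the closed extension, $u\in D(B')$ with $B'u=f$ exactly when there is $w\in W$ with $\tilde{j}w=u$ and $\form{\tilde{a}}(w,w')=\scalar{f}{\tilde{j}w'}_H$ for all $w'\in W$, whereas $u\in D(B)$ with $Bu=f$ exactly when there is $v\in V$ with $\tilde{j}v=u$ and $\form{\tilde{a}}(v,y)=\scalar{f}{\tilde{j}y}_H$ for all $y\in V$. Given $u\in D(B')$ as above and $y\in V$, decompose $y=\Pi y+(y-\Pi y)$ with $y-\Pi y\in V_1$; then $\form{\tilde{a}}(w,y)=\form{\tilde{a}}(w,\Pi y)+\form{\tilde{a}}(w,y-\Pi y)=\scalar{f}{\tilde{j}\Pi y}_H+0=\scalar{f}{\tilde{j}y}_H$, using $w\in W$ for the vanishing of the second term and $\tilde{j}\Pi=\tilde{j}$; hence $B'\subseteq B$. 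As $B$ and $B'$ are both $\m$-sectorial, a real $\lambda$ strictly to the left of the vertices of both makes $\lambda-B$ and $\lambda-B'$ bijective, and the inclusion $B'\subseteq B$ then forces $D(B)=D(B')$, so $B=B'$. By the uniqueness in the definition of the regular part we conclude $\form{a}_\reg=\form{b}$, which is~\eqref{eq:areg-gen}. I expect the crux to be this last step: a direct proof of the reverse inclusion $B\subseteq B'$ fails, because the defining relation of $W$ controls $\form{\tilde{a}}(W,V_1)$ but not $\form{\tilde{a}}(V_1,W)$ (the form is not symmetric), so one cannot simply replace a representing vector $v$ by $\Pi v$ in the left argument; it is the $\m$-sectoriality of both operators that upgrades the one-sided inclusion to an equality. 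A secondary delicate point is the identification of the closure of $\form{b}$, which rests on the equality $\norm{\cdot}_\form{b}=\norm{\Pi\Phi(\cdot)}_V$ and the injectivity of $\tilde{j}|_W$.
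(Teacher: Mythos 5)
Your proposal is correct. Note, however, that the paper does not prove this proposition from scratch: its ``proof'' is a translation of notation followed by a citation of \cite[Theorem~2.6]{tES2011} and the displayed formulas there (where $\Pi$ is called $P_{V(\form{\tilde a})}$). What you have written is in effect a self-contained reconstruction of that cited machinery: your subspace $W=\set{y\in V}{\form{\tilde a}(y,w)=0\text{ for all }w\in V_1}$ is exactly the space $V(\form{\tilde a})$ of \cite{tES2011}, your $\Pi$ is the (non-orthogonal) projection onto $W$ along $V_1$, and the identity $\form{\tilde h}(v,w)=\scalar{v}{w}_V-(1-\gamma)\scalar{\tilde j v}{\tilde j w}_H$ together with the Riesz representation of $\formimag\form{\tilde a}$ on $V_1$ is precisely how $T$ and formula~\eqref{eq:PVa} arise there. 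The individual steps all check out: the computation $\pi_1\Pi v+iT\Pi v=0$ (using $(-i+T_{11})(I_{V_1}+iT_{11})^{-1}=-i$), the decomposition $V=W\oplus V_1$, the isometry $\norm{u}_{\form{b}}=\norm{\Pi\Phi(u)}_V$ giving closability via the closed transported form on $\tilde j(W)$, and the identification $B'=B$. Your closing remark correctly isolates the genuine subtlety: because $\form{\tilde a}$ is not symmetric, $W$ only kills the second argument against $V_1$, so one gets the inclusion $B'\subseteq B$ directly and must invoke the $\m$-sectoriality (surjectivity of $\lambda-B$ and $\lambda-B'$ for $\lambda$ left of the vertices) to upgrade it to equality; this is the same mechanism used in \cite{AtE12:sect-form,tES2011}. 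The only points left implicit are routine: that $y\mapsto\form{\tilde a}(y,y)-\gamma\norm{\tilde j y}_H^2\in\Sigma_\theta$ extends from $\Phi(\formdom{a})$ to $V$ by continuity and closedness of $\Sigma_\theta$, and that the associated operator may be computed through any realisation of the completion. Neither is a gap.
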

\begin{proof}
In~\cite{tES2011} the operator $\Pi$ was denoted by $P_{V(\form{\tilde{a}})}$.
Now~\eqref{eq:PVa} and~\eqref{eq:areg-gen} follow from the displayed formula on the top of page 912 and~(2) in~\cite{tES2011}.
It is easily verified that $\pi_2$ is the orthogonal projection in $V$ onto
\[
    \{u\in V: \text{$\form{\tilde{h}}(u,v)=0$ for all $v\in\ker\tilde{j}$}\}.
\]
Then the remaining statements can be found in~\cite[Theorem~2.6]{tES2011}.
\end{proof}

We start by constructing a suitable completion of the pre-Hilbert space $(\formdom{a},\norm{\cdot}_{\form{a}})$
that allows us to 
get hold of the corresponding continuous extensions of 
$\form{a}$ and of the embedding of $\formdom{a}$ into $H$. 

Let $\form{h}$ be the real part of $\form{a}$. Then
\begin{multline*}
    \form{h}(u,v)=\scalar{A^{1/2}\nabla u}{A^{1/2}\nabla v} 
        + \tfrac{1}{2}\scalar{A^{1/2}\nabla u}{v(X+Y)} \\
        {} + \tfrac{1}{2}\scalar{u(X+Y)}{A^{1/2}\nabla v}
        + \scalar{(\Re c_0)u}{v}
\end{multline*}
for all $u,v\in\formdom{a}$. 
Let $\mathcal{H}$ be the Hilbert space $\Ltwo\times(\Ltwo)^d$ with the usual inner product.
Let $\gamma_0\in\RR$ be a vertex of the sectorial form $\form{a}$.
Since $X$ and $Y$ are bounded, there exists a $\gamma\le\gamma_0$ such that 
the sesquilinear form $\ascalar{\cdot}{\cdot}\colon\mathcal{H}\times\mathcal{H}\to\CC$ defined by
\begin{multline*}
    \ascalar{(u_1,w_1)}{(u_2,w_2)} = \scalar{w_1}{w_2}
        + \tfrac{1}{2}\scalar{w_1}{u_2(X+Y)} \\
        {} + \tfrac{1}{2}\scalar{u_1(X+Y)}{w_2}
        + \scalar{(1-\gamma + \Re c_0)u_1}{u_2}
\end{multline*}
defines an equivalent inner product on $\mathcal{H}$. 
Note that $\gamma$ is also a vertex of $\form{a}$.

Let $\mathcal{H}'$ denote the space $\Ltwo\times(\Ltwo)^d$ 
equipped with the inner product $\ascalar{\cdot}{\cdot}$.
Define the map $\Phi\colon(\formdom{a},\norm{\cdot}_\form{a})\to\mathcal{H}'$ by
\[
    \Phi(u) = (u, A^{1/2}\nabla u).
\]
Then $\Phi$ is an isometry.
Hence the completion $V$ of $\formdom{a}$ can be realised as the closure of $\Phi(\formdom{a})$ in $\mathcal{H}'$ 
equipped with the inner product of $\mathcal{H}'$.
Note that $V$ is also closed in $\mathcal{H}$.
The embedding of $\formdom{a}$ into $H$ has as continuous extension to the completion $V$
the map $\tilde{j}\colon V\to H$ given by $\tilde{j}(u,w)=u$.
Furthermore, due to~\eqref{eq:simpform} the continuous extension of the form $\form{a}$
is the form $\form{\tilde{a}}\colon V\times V\to\CC$ given by
\begin{equation}\label{eq:atilde}
    \form{\tilde{a}}((u_1,w_1),(u_2,w_2)) = \scalar{(I+iZ)w_1}{w_2} + \scalar{w_1}{u_2X} + \scalar{u_1Y}{w_2} + \scalar{c_0u_1}{u_2}.
\end{equation}
Then the real part $\form{\tilde{h}}$ of $\form{\tilde{a}}$ is given by
\begin{equation}
\label{eq:form-ht}
    \form{\tilde{h}}((u_1,w_1),(u_2,w_2)) = \scalar{w_1}{w_2} + \tfrac{1}{2}\scalar{w_1}{u_2(X+Y)} + \tfrac{1}{2}\scalar{u_1(X+Y)}{w_2} + \scalar{(\Re c_0)u_1}{u_2}.
\end{equation}
Note that
\[
    \ascalar{(u_1,w_1)}{(u_2,w_2)} = \scalar{(u_1,w_1)}{(u_2,w_2)}_{\form{\tilde{h}}}
\]
for all $(u_1,w_1), (u_2,w_2)\in V$.

Next, set $V_1\coloneqq \ker\tilde{j}$ and $V_2\coloneqq V_1^\perp$. Let $\pi_1$ and $\pi_2$ be the orthogonal projections in $V$ onto $V_1$ and $V_2$, respectively.
We shall show that $\pi_1$ and $\pi_2$ can be represented by multiplication operators in $\mathcal{H}$.
To this end, let $V_\sing$ be such that $V_\sing\subset (\Ltwo)^d$ and $V_1=\{0\}\times V_\sing$.
Clearly $V_\sing$ is a closed subspace of $(\Ltwo)^d$. Moreover, since $\form{a}$ satisfies Condition~\upshape{(L)} or~\upshape{(B)},
the orthogonal projection in $(\Ltwo)^d$ onto $V_\sing$ is given by the multiplication operator associated with a measurable function $Q\colon\Omega\to\CC^{d\times d}$
that has values in the orthogonal projection matrices. 
This result is based on~\cite[Proof of Theorem~1]{Vog1}, see also the discussion in~\cite[Section~4]{tES2011}.
Now we are able to state the main result of this paper.
\begin{theorem}\label{thm:formula}
Let $\form{a}$ be defined as in Section~\ref{sec:def-form}. Assume that $\form{a}$ satisfies Condition~\textup{(L)} or~\textup{(B)}.
Let $Q\colon\Omega\to\CC^{d\times d}$ be as before and set $W\coloneqq Q(I+iQZQ)^{-1}Q$ and $P\coloneqq I-Q$.
Then the regular part of $\form{a}$ is given by
\begin{equation}\label{eq:genformula}
\begin{split}
    \form{a}_\reg(u,v)
        &=\scalar{(I+iZ + ZW Z)PA^{1/2}\nabla u}{PA^{1/2}\nabla v} \\
        &\quad {} + \scalar[b]{\conj{X}^{\,\mathdcl{t}}(I-iWZ)PA^{1/2}\nabla u}{v} 
        + \scalar[b]{u}{\conj{Y}^{\,\mathdcl{t}}(I+iW^*Z)PA^{1/2}\nabla v} \\
        &\quad {} - \scalar[b]{(\conj{X}^{\,\mathdcl{t}} WY)u}{v} + \scalar{c_0u}{v}.
\end{split}
\end{equation}
\end{theorem}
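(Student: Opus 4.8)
The plan is to apply Proposition~\ref{prop:areg-abstract} to the concrete completion $(V,\Phi)$ constructed above, so that $\form{a}_\reg(u,v)=\form{\tilde{a}}(\Pi\Phi(u),\Pi\Phi(v))$, and to make every ingredient of the operator $\Pi$ from~\eqref{eq:PVa} explicit as a multiplication operator. Since the orthogonal projection in $(\Ltwo)^d$ onto $V_\sing$ is multiplication by $Q$, inserting the explicit expression~\eqref{eq:form-ht} for $\form{\tilde{h}}$ and solving the orthogonality conditions gives, for $(u,w)\in V$,
\[
    \pi_1(u,w)=\bigl(0,\ Qw+\tfrac12 u\,Q(X+Y)\bigr),\qquad
    \pi_2(u,w)=\bigl(u,\ Pw-\tfrac12 u\,Q(X+Y)\bigr).
\]
From~\eqref{eq:atilde} one reads off the imaginary part of $\form{\tilde{a}}$, and solving the defining relation of $T$ tested against the elements $(0,\zeta)$ with $\zeta\in V_\sing$ yields
\[
    T(u,w)=\bigl(0,\ QZw-\tfrac i2 u\,Q(Y-X)\bigr),\qquad T_{11}(0,\eta)=(0,\,QZQ\eta).
\]
As $QZQ$ is a bounded Hermitian matrix-valued function, $I+iQZQ$ is pointwise invertible, and since $Q$ commutes with $QZQ$ a short check shows that on $V_1\cong V_\sing$ the multiplication operator $W=Q(I+iQZQ)^{-1}Q=(I+iQZQ)^{-1}Q$ realises $(I_{V_1}+iT_{11})^{-1}$.

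Substituting these formulas into $\Pi\Phi(u)=\pi_2\Phi(u)-i(I_{V_1}+iT_{11})^{-1}T\pi_2\Phi(u)$ with $\Phi(u)=(u,A^{1/2}\nabla u)$, and using the elementary identities $WQ=QW=W$ and $WZQ=i(W-Q)$ (both immediate from $Q(QZQ)=(QZQ)Q=QZQ$), one finds that the terms carrying the factor $Q(X+Y)$ cancel, leaving
\[
    \Pi\Phi(u)=\bigl(u,\ (I-iWZ)PA^{1/2}\nabla u-uWY\bigr).
\]
Feeding this back into~\eqref{eq:atilde} expands $\form{a}_\reg(u,v)=\form{\tilde{a}}(\Pi\Phi(u),\Pi\Phi(v))$ into a sum of nine terms, which I then collapse to~\eqref{eq:genformula}. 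Two facts drive the reduction. First, $PW=WP=0$ together with the pointwise orthogonality of the range and kernel of $Q(x)$, which annihilate every scalar product pairing a vector in the range of $P$ with one in the range of $W$ or $W^*$; in particular, through a factor $W^*P=0$, the cross term $\scalar{(I+iZ)(I-iWZ)PA^{1/2}\nabla u}{vWY}$ vanishes outright. Second, writing $W=(I+iQZQ)^{-1}Q$ and $W^*=(I-iQZQ)^{-1}Q$ and applying functional calculus in the Hermitian matrix $QZQ$,
\[
    W^*W=WW^*=\tfrac12(W+W^*),\qquad W^*ZW=WZW^*=\tfrac i2(W-W^*),
\]
whence $W^*(I+iZ)W=W^*$ and $W^*(-I+W+iZW)=0$.

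With these identities the collapse goes as follows. The relation $W^*(I+iZ)W=W^*$ reduces $\scalar{(I+iZ)uWY}{vWY}$ to $\scalar{uY}{vWY}$, which cancels $-\scalar{uY}{vWY}$; a brief rearrangement, again using that relation, shows that the remaining mixed term $-\scalar{(I+iZ)uWY}{(I-iWZ)PA^{1/2}\nabla v}$ combines with $\scalar{uY}{(I-iWZ)PA^{1/2}\nabla v}$ to produce exactly $\scalar{u}{\conj{Y}^{\,\mathdcl{t}}(I+iW^*Z)PA^{1/2}\nabla v}$; the relation $W^*(-I+W+iZW)=0$ turns $\scalar{(I+iZ)(I-iWZ)PA^{1/2}\nabla u}{(I-iWZ)PA^{1/2}\nabla v}$ into $\scalar{(I+iZ+ZWZ)PA^{1/2}\nabla u}{PA^{1/2}\nabla v}$; and $\scalar{(I-iWZ)PA^{1/2}\nabla u}{vX}=\scalar{\conj{X}^{\,\mathdcl{t}}(I-iWZ)PA^{1/2}\nabla u}{v}$ together with $-\scalar{uWY}{vX}=-\scalar{(\conj{X}^{\,\mathdcl{t}}WY)u}{v}$ give the remaining two terms, while $\scalar{c_0u}{v}$ is untouched.

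I expect the principal obstacle to be twofold. Conceptually it is the identification of $(I_{V_1}+iT_{11})^{-1}$ with the multiplication operator $W$: one must invoke the structural result recalled before the theorem (from~\cite{Vog1}) that the projection onto $V_\sing$ is a matrix multiplication operator, verify the pointwise invertibility of $I+iQZQ$, and keep careful track of the projections so that $W$ genuinely represents the abstract inverse on $V_1$. Technically, the obstacle is organising the nine-term expansion so that precisely the five terms of~\eqref{eq:genformula} remain; although each individual cancellation is forced by one of the identities above, the bookkeeping --- in particular the sesquilinearity conventions and the exact position of every occurrence of $P$, $Q$, $W$ and $W^*$ --- must be handled with care.
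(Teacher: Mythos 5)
Your proposal is correct and follows essentially the same route as the paper's proof: the same concrete completion, the same multiplication-operator representations of $\pi_1$, $\pi_2$, $T$, $T_{11}$ and $(I_{V_1}+iT_{11})^{-1}$, the same resulting formula $\Pi\Phi(u)=(u,(I-iWZ)PA^{1/2}\nabla u-uWY)$, and an equivalent set of reduction identities (the paper works with $Q(I+iZ)(I-iWZ)P=0$ and $Q(I+iZ)W=Q$, which yield your $W^*$-versions upon left-multiplying by $W^*=W^*Q$). The one point to tighten is the claim that the cross term $\scalar{(I+iZ)(I-iWZ)PA^{1/2}\nabla u}{vWY}$ vanishes ``through a factor $W^*P=0$'': moving $W$ across leaves $W^*(I+iZ)(I-iWZ)P$, which is not of that form and only vanishes after expanding with $W^*P=0$, $W^*W=\tfrac{1}{2}(W+W^*)$ and $W^*ZW=\tfrac{i}{2}(W-W^*)$ --- identities you do state, so the conclusion stands but the stated mechanism is too quick.
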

\begin{proof}
We will use and adopt the notation of Proposition~\ref{prop:areg-abstract}.
First we represent the operators $\pi_2$, $T\pi_2$ and $(I_{V_1}+iT_{11})^{-1}$ 
by multiplication operators.

Observe that $(u,w)\mapsto (0,w+\tfrac{1}{2}u(X+Y))$ is the orthogonal projection of $\mathcal{H}'$ onto $\{0\}\times (\Ltwo)^d$.
Moreover, considering $\{0\}\times(\Ltwo)^d$ as a (closed) subspace of $\mathcal{H}'$, the map $(0,w)\mapsto (0,Qw)$ is the orthogonal projection of $\{0\}\times (\Ltwo)^d$ onto $V_1$.
Since $V_1\subset V\subset\mathcal{H'}$, the map $\pi_1$ is given by
\[
     \pi_1(u,w) = \bigl(0,Qw + \tfrac{1}{2}uQ(X+Y)\bigr)
\]
for all $(u,w)\in V$. Therefore
\[
     \pi_2(u,w) = \bigl(u, (I-Q)w - \tfrac{1}{2}uQ(X+Y)\bigr)
\]
for all $(u,w)\in V$.

Let $(u_1,w_1), (u_2,w_2)\in V$.
It follows from~\eqref{eq:atilde} that
\begin{multline*}
    (\formimag\form{\tilde{a}})\bigl((u_1,w_1),(u_2,w_2)\bigr) = \scalar{Zw_1}{w_2} 
        - \tfrac{i}{2}\scalar{w_1}{u_2(X-Y)} \\ {} + \tfrac{i}{2}\scalar{u_1(X-Y)}{w_2}
        + \scalar{(\Im c_0)u_1}{u_2}.
\end{multline*}
So if $(u_2,w_2)\in V_1$, then $u_2=0$ and
\begin{align*}
    (\formimag\form{\tilde{a}})\bigl((u_1,w_1),(0,w_2)\bigr) 
        &= \scalar{Zw_1}{w_2} + \tfrac{i}{2}\scalar{u_1(X-Y)}{w_2} \\
        &= \scalar{QZw_1 + \tfrac{i}{2} u_1Q(X-Y)}{w_2} \\
        &= \form{\tilde{h}}\bigl((0,QZw_1+\tfrac{i}{2}u_1 Q(X-Y)), (0,w_2)\bigr).
\end{align*}
Hence the operator $T\in\Linop(V,V_1)$ in Proposition~\ref{prop:areg-abstract} is given by 
\[
    T(u,w)=\bigl(0,QZw+\tfrac{i}{2}uQ(X-Y)\bigr)
\]
for all $(u,w)\in V$.
Then $T_{11}=\restrict{T}{V_1}$ is given by $T_{11}(0,w)=(0,QZw)$ for all $(0,w)\in V_1$.
So as in~\cite[proof of~(13)]{tES2011} we have
\[
    (I_{V_1}+iT_{11})^{-1}(0,w) = \bigl(0,(I+iQZQ)^{-1}w\bigr) = (0,Ww)
\]
for all $(0,w)\in V_1$.
Next note that
\begin{equation}\label{eq:Tpi2}
   T\pi_2(u,w) = \bigl(0, QZ(I-Q)w - \tfrac{1}{2}uQZQ(X+Y) + \tfrac{i}{2}uQ(X-Y)\bigr)
\end{equation}
for all $(u,w)\in V$.

Now we plug the representations for $\pi_2$, $T\pi_2$ and $(I_{V_1}+iT_{11})^{-1}$ into~\eqref{eq:PVa}.
One easily verifies that $WZQ=i(W-Q)$. Then by a straightforward computation it follows from~\eqref{eq:PVa} that
\[
    \Pi(u,w) = \bigl(u, (I-iWZ)Pw - uWY\bigr)
\]
for all $(u,w)\in V$, where $\Pi$ is as in Proposition~\ref{prop:areg-abstract}.
Let $(u_1,w_1), (u_2,w_2)\in V$. Then
\begin{align}\label{eq:a-reg}
\begin{split}
    \mathrlap{\form{\tilde{a}}\bigl(\Pi(u_1,w_1),\Pi(u_2,w_2)\bigr)}\qquad & \\
        &= \scalar{(I+iZW^*)(I+iZ)(I-iWZ)Pw_1}{Pw_2} \\
        &\quad {} - \scalar{Q(I+iZ)(I-iWZ)Pw_1}{u_2WY} + \scalar{(I-iWZ)Pw_1}{u_2X} \\
        &\quad {} - \scalar{u_1Y}{W^*(I-iZ)(I-iWZ)Pw_2} + \scalar{u_1Y}{(I-iWZ)Pw_2} \\
        &\quad {} + \scalar{u_1Q(I+iZ)WY}{u_2WY} - \scalar{u_1WY}{u_2X} - \scalar{u_1QY}{u_2WY} + \scalar{c_0u_1}{u_2}.
\end{split}        
\end{align}
We simplify~\eqref{eq:a-reg}.
Using the identity $QZW=i(W-Q)$, one deduces that
\begin{equation}\label{eq:fo-w1u2}
    Q(I+iZ)(I-iWZ)P=0.
\end{equation}
This simplifies the first-order terms in~\eqref{eq:a-reg} that involve $w_1$ and $u_2$.
Using~\eqref{eq:fo-w1u2} and $PW=0$, one establishes that
\[
    P(I+iZW^*)(I+iZ)(I-iWZ)P = P(I+iZ + ZWZ)P.
\]
This simplifies the second-order terms in~\eqref{eq:a-reg}.
One readily verifies that $2W^*W = W^*+W$.
Then, also using $QZW=i(W-Q)$ and $W^*P=0$, it follows that  
\[
    -W^*(I-iZ)(I-iWZ)P + (I-iWZ)P = \bigl(I + i(2W^*W-W)Z\bigr)P =  (I + iW^*Z)P.
\]
This simplifies the first-order terms that involve $u_1$ and $w_2$.
Finally, for the terms involving $u_1$ and $u_2$, observe that
\[
    Q(I+iZ)W - Q = 0.
\]
Now the theorem follows from~\eqref{eq:a-reg} and Proposition~\ref{prop:areg-abstract}.
\end{proof}

Theorem~\ref{thm:formula} shows that the regular part $\form{a}_\reg$ is indeed a differential sectorial form. The most remarkable aspect of~\eqref{eq:genformula} is the appearance of the
zeroth-order term involving $X$ and $Y$, even if $c_0=0$, i.e., if the original form $\form{a}$ did not have a zeroth-order term. 
This new zeroth-order term can affect the vertex of the singular part.
A simple concrete example where this happens is given in Example~\ref{ex:vertex-shift}.

\section{About the sectoriality of the singular part}

In this section we first characterise in the setting of Proposition~\ref{prop:areg-abstract} when the singular part $\form{a}_\sing=\form{a}-\form{a}_\reg$ is sectorial.
In~\cite[Proposition~3.1]{tES2011} we proved that $\formreal(\form{a}_\reg)=(\formreal\form{a})_\reg$ if and only if $T\pi_2=0$.
Moreover, if $\form{a}$ is a pure second-order differential sectorial form satisfying Condition~(L) or (B),
then this is also equivalent to $\form{a}_\sing$ being sectorial, cf.~\cite[Corollary~4.3]{tES2011}.
We shall see that for differential sectorial forms the presence of lower-order terms can lead to a more diverse behaviour than possible in the pure second-order
case.

\begin{lemma}\label{lem:as-sect-char}
Assume the notation and conditions of Proposition~\ref{prop:areg-abstract}.
Then $\form{a}_\sing=\form{a}-\form{a}_\reg$ is sectorial
if and only if there exists an $M>0$ such that
\begin{equation}\label{eq:est8}
    \norm{T\pi_2\Phi(u)}_V^2 \le M\norm{u}^2_H
\end{equation}
for all $u\in\formdom{a}$.
\end{lemma}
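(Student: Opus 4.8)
The plan is to compute $\form{a}_\sing = \form{a} - \form{a}_\reg$ explicitly using Proposition~\ref{prop:areg-abstract} and then read off the sectoriality condition directly. Recall that $\form{a}_\reg(u,v) = \form{\tilde a}(\Pi\Phi(u),\Pi\Phi(v))$ and $\form{a}(u,v) = \form{\tilde a}(\Phi(u),\Phi(v))$, where $\Pi = \pi_2 - i(I_{V_1}+iT_{11})^{-1}T\pi_2$. Writing $\Phi(u) = \pi_1\Phi(u) + \pi_2\Phi(u)$ and using that $\pi_1\Phi(u) = \Phi(u) - \pi_2\Phi(u)$, the difference $\Phi(u) - \Pi\Phi(u) = \pi_1\Phi(u) + i(I_{V_1}+iT_{11})^{-1}T\pi_2\Phi(u)$ lies in $V_1 = \ker\tilde j$. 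The first step is therefore to expand the sesquilinear difference $\form{\tilde a}(\Phi(u),\Phi(u)) - \form{\tilde a}(\Pi\Phi(u),\Pi\Phi(u))$ and, using the defining relation for $T$ (namely $(\formimag\form{\tilde a})(x,y) = \form{\tilde h}(Tx,y)$ for $y\in V_1$) together with self-adjointness of $T_{11}$ on $V_1$, massage everything into an expression controlled by $\form{\tilde h}$ evaluated on vectors built from $T\pi_2\Phi(u)$ and $\pi_1\Phi(u)$.

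The key observation I expect to drive the argument is that the singular part $\form{a}_\sing$ is a sesquilinear form whose associated quadratic form, after the above expansion, is comparable (up to bounded sectorial perturbations coming from the $Z$-type twist encoded in $\form{\tilde a}$ versus $\form{\tilde h}$) to $\form{\tilde h}$ applied to the $V_1$-component $\Phi(u) - \Pi\Phi(u)$. Since $\form{\tilde h}$ restricted to $V_1$ is a genuine inner product equivalent to $\norm{\cdot}_V^2$ on that subspace, the real part of $\form{a}_\sing(u,u)$ is then comparable to $\norm{\Phi(u) - \Pi\Phi(u)}_V^2$. Sectoriality of $\form{a}_\sing$ with vertex $0$ amounts to the imaginary part being dominated by the real part, which for a form of this structure reduces to the real part alone being bounded by $M\norm{u}_H^2$. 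The term $(I_{V_1}+iT_{11})^{-1}$ is a bounded operator with bounded inverse on $V_1$ (since $T_{11}$ is self-adjoint, $I_{V_1}+iT_{11}$ is invertible with $\norm{(I_{V_1}+iT_{11})^{-1}}\le 1$ and is bounded below), so $\norm{(I_{V_1}+iT_{11})^{-1}T\pi_2\Phi(u)}_V$ is comparable to $\norm{T\pi_2\Phi(u)}_V$; likewise $\pi_1\Phi(u)$ contributes a piece comparable to $\norm{T\pi_2\Phi(u)}_V$ as well, since one checks (this is essentially the content of \cite[Proposition~3.1]{tES2011}) that the $\pi_1$-part is also governed by $T\pi_2$. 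Hence the quadratic form $\Re\form{a}_\sing(u,u)$ is comparable to $\norm{T\pi_2\Phi(u)}_V^2$, and the estimate \eqref{eq:est8} is exactly the condition for this to be $\lesssim \norm{u}_H^2$.

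Concretely, I would proceed as follows. First, set $s\coloneqq \Phi(u) - \Pi\Phi(u) = \pi_1\Phi(u) + i(I_{V_1}+iT_{11})^{-1}T\pi_2\Phi(u) \in V_1$ and $r\coloneqq \Pi\Phi(u)$, so $\Phi(u) = r + s$. Second, expand $\form{a}_\sing(u,u) = \form{\tilde a}(r+s,r+s) - \form{\tilde a}(r,r) = \form{\tilde a}(r,s) + \form{\tilde a}(s,r) + \form{\tilde a}(s,s)$. Third, use $\form{\tilde h}(\Pi\Phi(u), v) = 0$ for $v \in V_1$ (which follows since $\pi_2 V \subseteq \ker(\form{\tilde h}(\cdot,\cdot)|_{V_1})$ and the correction term lies in $V_1$ but is set up precisely so the real part vanishes against $V_1$ — this needs a short verification) to kill the real parts of the cross terms, leaving imaginary contributions that by the $T$-relation are again expressible through $T\pi_2\Phi(u)$. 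Fourth, bound all resulting terms above and below by constant multiples of $\norm{T\pi_2\Phi(u)}_V^2$ using boundedness of $Z$ (hence of $\form{\tilde a}$ relative to $\form{\tilde h}$) and the equivalence of $\form{\tilde h}|_{V_1}$ with $\norm{\cdot}_V^2$. Fifth, conclude: $\form{a}_\sing$ sectorial (necessarily with vertex $0$, since $\form{a}_\sing$ vanishes on the dense closable core in the appropriate sense) is equivalent to $\abs{\form{a}_\sing(u,u)} \le M\norm{u}_H^2$ for some $M$, which by the comparability is equivalent to \eqref{eq:est8}.

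The main obstacle I anticipate is the bookkeeping in the third and fourth steps: verifying cleanly that the real parts of the cross terms $\form{\tilde a}(r,s) + \form{\tilde a}(s,r)$ vanish (or are absorbed), and showing the two-sided comparability between $\Re\form{a}_\sing(u,u)$ and $\norm{T\pi_2\Phi(u)}_V^2$ without losing a direction of the inequality. The lower bound $\norm{T\pi_2\Phi(u)}_V^2 \lesssim \Re\form{a}_\sing(u,u)$ is what makes \eqref{eq:est8} \emph{necessary}, and it relies on the off-diagonal structure of $\form{\tilde a}$ versus $\form{\tilde h}$ not introducing cancellation — here the self-adjointness of $T_{11}$ and the explicit form of $\Pi$ are essential. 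Everything else (the upper bound giving sufficiency, and reducing sectoriality to a real-part bound) is comparatively routine given the boundedness hypotheses on $X$, $Y$, $Z$ and $c_0$.
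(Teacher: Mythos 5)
Your overall strategy contains a genuine error that undermines both directions. The pivotal quantitative claim --- that $\Re\form{a}_\sing(u,u)$ is two-sidedly comparable to $\norm{T\pi_2\Phi(u)}_V^2$, so that sectoriality of $\form{a}_\sing$ reduces to the boundedness estimate $\abs{\form{a}_\sing(u,u)}\le M\norm{u}_H^2$ --- is false. A sectorial form need not be bounded, and the singular part typically is not: in the symmetric case one has $T=0$, hence $T\pi_2\Phi(u)=0$, while $\form{a}_\sing(u,u)=\norm{\pi_1\Phi(u)}_V^2$ is a nonzero unbounded positive (hence sectorial) form whenever $\form{a}$ is not closable; see also Example~\ref{ex:vertex-shift}, where the sectorial singular part contains the term $\int\one_K\abs{u'}^2$. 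The quantity $\norm{\pi_1\Phi(u)}_V^2$ survives in $\Re\form{a}_\sing(u,u)$ and is in no way governed by $T\pi_2\Phi(u)$; \cite[Proposition~3.1]{tES2011} only asserts that $T\pi_2=0$ is equivalent to $\formreal(\form{a}_\reg)=(\formreal\form{a})_\reg$, not that the $\pi_1$-component is controlled by $T\pi_2$. (A smaller slip: the correction term $-i(I_{V_1}+iT_{11})^{-1}T\pi_2\Phi(u)$ lies in $V_1$ and is not $\form{\tilde{h}}$-orthogonal to $V_1$; what vanishes is the full expression $\form{\tilde{a}}(\Pi\Phi(u),v)$ for $v\in V_1$, not its real part separately.)

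The correct mechanism, which the paper uses, keeps the unbounded positive term on both sides of the sector inequality instead of trying to eliminate it. With your decomposition $s=\Phi(u)-\Pi\Phi(u)$ one obtains, once \eqref{eq:est8} is used to absorb the contributions of $T\pi_2\Phi(u)$, the lower bound $\Re\form{a}_\sing(u,u)\ge\norm{\pi_1\Phi(u)}_V^2-\omega_\sing\norm{u}_H^2$, while $\abs{\Im\form{a}_\sing(u,u)}$ is bounded by a combination of $\norm{T\pi_1\Phi(u)}_V\norm{\pi_1\Phi(u)}_V$, $\norm{T\pi_2\Phi(u)}_V\norm{\pi_1\Phi(u)}_V$ and $\norm{T\pi_2\Phi(u)}_V^2$; under \eqref{eq:est8} this yields $\abs{\Im\form{a}_\sing(u,u)}\le C\bigl(\norm{\pi_1\Phi(u)}_V^2+\norm{u}_H^2\bigr)\le C'\bigl(\Re\form{a}_\sing(u,u)+\norm{u}_H^2\bigr)$, which is exactly sectoriality. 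For necessity your comparability argument also cannot work, since it would require $\Re\form{a}_\sing(u,u)\le M\norm{u}_H^2$; the paper instead derives \eqref{eq:est8} from the estimate \cite[(8)]{tES2011} combined with the identity $\tilde{j}(\pi_2\Phi(u))=u$.
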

\begin{proof}
If $\form{a}_\sing$ is sectorial, then~\eqref{eq:est8} follows from~\cite[(8)]{tES2011} and the identity $\tilde{j}(\pi_2\Phi(u))=u$
for all $u\in\formdom{a}$.

For the converse, note that we may assume without loss of generality that $\norm{u}^2_V=\form{\tilde{h}}(u,u)$ for all $u\in V$.
By the formula for $\formreal(\form{a}_\sing)$ in the proof of~\cite[Proposition~3.4]{tES2011} and~\eqref{eq:est8},
there exists an $\omega_\sing>0$ such that
\begin{equation}\label{eq:Re-asing-est}
    \Re\form{a}_\sing(u,u)\ge \norm{\pi_1\Phi(u)}_V^2 - \omega_\sing\norm{u}_H^2
\end{equation}
for all $u\in\formdom{a}$.
By the formula for $\formimag(\form{a}_\sing)$ in the proof of~\cite[Proposition~3.4]{tES2011}, we have
\begin{multline*}
    \abs{\Im\form{a}_\sing(u,u)} \le \norm{T\pi_1\Phi(u)}_V\norm{\pi_1\Phi(u)}_V 
    + 2\norm{T\pi_2\Phi(u)}_V\norm{\pi_1\Phi(u)}_V \\
        + \norm{T_{11}(I_{V_1}+T^2_{11})^{-1}}\norm{T\pi_2 \Phi(u)}^2_V
\end{multline*}
for all $u\in\formdom{a}$. Using~\eqref{eq:est8} and taking $C>0$ sufficiently large, we obtain
\[
    \abs{\Im\form{a}_\sing(u,u)}\le C\bigl(\norm{\pi_1\Phi(u)}_V^2 + \norm{u}_H^2\bigr).
\]
for all $u\in\formdom{a}$.
By~\eqref{eq:Re-asing-est} this shows that $\form{a}_\sing$ is sectorial.
\end{proof}
\begin{remark}
It is readily verified that, after obvious modifications, Lemma~\ref{lem:as-sect-char}
holds in the general $j$-sectorial setting of~\cite[Section~3]{tES2011}.
\end{remark}

From now on we assume that $\form{a}$ is defined as in Section~\ref{sec:def-form}. Moreover, we assume that $\form{a}$ satisfies the conditions of Theorem~\ref{thm:formula},
i.e., we assume that $\form{a}$ satisfies Condition~\textup{(L)} or~\textup{(B)}.
In the following, we shall use the notation of Theorem~\ref{thm:formula}.

Let $\form{a}^\mathdcl{p}$ be the differential sectorial form that belongs to the pure second-order differential expression
\[
    -\sum_{k,l=1}^d\partial_l c_{kl}\partial_k.
\]
We denote the regular part and the singular part of $\form{a}^\mathdcl{p}$ by $\form{a}^\mathdcl{p}_\reg$ and $\form{a}^\mathdcl{p}_\sing$, respectively.
Observe that
\begin{align}
    \begin{split}\label{eq:ar-formula}
    \form{a}_\reg(u,v) &= \form{a}^\mathdcl{p}_\reg(u,v) 
     + \scalar[b]{(I-iWZ)PA^{1/2}\nabla u}{vX} 
     + \scalar[b]{uY}{(I+iW^*Z)PA^{1/2}\nabla v} \\
     &\qquad {} - \scalar[b]{uWY}{vX} + \scalar{c_0u}{v}
     \end{split}
\end{align}
for all $u,v\in\formdom{a}$.
Therefore with~\eqref{eq:simpform} one deduces that
\begin{align}
    \begin{split}\label{eq:as-formula}
    \form{a}_\sing(u,v) &= \form{a}^\mathdcl{p}_\sing(u,v)
    + \scalar[b]{(Q+iWZP)A^{1/2}\nabla u}{vX} + \scalar[b]{uY}{(Q-iW^*ZP)A^{1/2}\nabla v} \\
    &\qquad {}+ \scalar{uWY}{vX}
    \end{split}
\end{align}
for all $u,v\in\formdom{a}$.

The next lemma is an intermediate result in the proof of~\cite[Corollary~4.3]{tES2011}.
\begin{lemma}\label{lem:QZImQA}
If $QZ(I-Q)A^{1/2}=0$ a.e., then $QZ=ZQ$ a.e.
\end{lemma}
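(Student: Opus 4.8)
The plan is to work pointwise at almost every $x\in\Omega$ and translate the operator-valued hypothesis into a statement about the fixed matrices $Q=Q(x)$, $Z=Z(x)$ and $A^{1/2}=A^{1/2}(x)$. The hypothesis $QZ(I-Q)A^{1/2}=0$ a.e.\ gives, for a.e.\ $x$, that $Q(x)Z(x)(I-Q(x))A^{1/2}(x)=0$. Writing $P=I-Q$, this says $QZP$ annihilates the range of $A^{1/2}$. Since $A^{1/2}$ is Hermitian, $\operatorname{ran} A^{1/2}=(\ker A^{1/2})^\perp$, so equivalently $\operatorname{ran}(PZ^*Q)\subseteq\ker A^{1/2}$ (taking adjoints, using that $Q$, $P$ and $Z$ are Hermitian). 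The key point is to recall \emph{why} $\ker A^{1/2}$ is relevant to $Q$: by construction $V_1=\{0\}\times V_\sing$ is contained in the closure $V$ of $\Phi(\formdom{a})$, and the first component of $\Phi(u)$ being zero while the second is $A^{1/2}\nabla u$ forces the elements of $V_\sing$ to take values, a.e., in $\ker A^{1/2}(x)$ — more precisely, $A^{1/2}(x)Q(x)=0$ a.e., equivalently $Q(x)A^{1/2}(x)=0$ a.e.\ since both matrices are Hermitian. This is exactly the fact used throughout \cite{tES2011} and encoded in \cite[Lemma~4.1]{tES2011}; I would invoke it as established.

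Given $QA^{1/2}=0$ a.e., I would argue as follows. From $QZPA^{1/2}=0$ and $A^{1/2}=PA^{1/2}+QA^{1/2}=PA^{1/2}$ (since $QA^{1/2}=0$), we get $QZA^{1/2}=QZPA^{1/2}=0$ a.e. Thus $QZ$ annihilates $\operatorname{ran}A^{1/2}$, i.e.\ $\operatorname{ran}((QZ)^*)\subseteq\ker A^{1/2}=\operatorname{ran}Q$ (again using $A^{1/2}$ Hermitian and $Q$ the orthogonal projection onto $\ker A^{1/2}$). Since $Q$ is Hermitian and $Z$ is Hermitian, $(QZ)^*=ZQ$, so $\operatorname{ran}(ZQ)\subseteq\operatorname{ran}Q$, which means $Q(ZQ)=ZQ$, i.e.\ $QZQ=ZQ$. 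Taking adjoints of $QZQ=ZQ$ gives $QZQ=QZ$. Combining, $ZQ=QZQ=QZ$ a.e., which is the claim.

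The main obstacle — and the step I want to get exactly right — is pinning down the identity $QA^{1/2}=0$ a.e.\ and making sure I am entitled to use it without a fresh proof. It holds because $V_1=\ker\tilde j=\{0\}\times V_\sing$ sits inside $V=\overline{\Phi(\formdom{a})}$, and elements of $\Phi(\formdom{a})$ have second component $A^{1/2}\nabla u$, which lies pointwise in $\operatorname{ran}A^{1/2}(x)$; passing to the closure and to the projection $Q$ onto $V_\sing$, one obtains that $Q(x)$ projects onto a subspace of $\ker A^{1/2}(x)$ for a.e.\ $x$, whence $A^{1/2}Q=QA^{1/2}=0$. This is precisely the content referenced just before Theorem~\ref{thm:formula} (``based on \cite[Proof of Theorem~1]{Vog1}, see also \cite[Section~4]{tES2011}'') and is one of the intermediate facts in the proof of \cite[Corollary~4.3]{tES2011}; since the lemma is itself stated as an intermediate step extracted from that proof, I would simply cite it. Everything else is elementary linear algebra with Hermitian matrices and orthogonal projections, carried out pointwise a.e., so no measurability issue arises beyond what is already recorded for $Q$, $Z$ and $A^{1/2}$.
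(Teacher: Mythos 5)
There is a genuine error at the heart of your argument: the identity $QA^{1/2}=0$ a.e.\ is false, and the correct relation between $Q$ and $A^{1/2}$ points the other way. Elements of $V_\sing$ are $L^2$-limits of functions of the form $A^{1/2}\nabla u_n$ (with $u_n\to 0$ in $L^2$), and such functions take values pointwise in $\operatorname{ran}A^{1/2}(x)$, not in $\ker A^{1/2}(x)$; consequently $\operatorname{ran}Q(x)\subseteq\operatorname{ran}A^{1/2}(x)$ for a.e.\ $x$. The paper's own Example~\ref{ex:vertex-shift} is a direct counterexample to your claimed identity: there $A^{1/2}=Q=\one_K$, so $QA^{1/2}=\one_K\neq 0$ on a set of positive measure. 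As a further sanity check, if $QA^{1/2}=0$ held a.e., then every $w\in V_\sing$ would satisfy $w=Qw=\lim QA^{1/2}\nabla u_n=0$, so $V_1=\{0\}$, the form would be closable and the singular part would vanish identically --- the lemma would be vacuous and the whole paper pointless. Everything downstream of this identity collapses: you cannot pass from $QZPA^{1/2}=0$ to $QZA^{1/2}=0$, and the identification $\ker A^{1/2}=\operatorname{ran}Q$ that you use to get $\operatorname{ran}(ZQ)\subseteq\operatorname{ran}Q$ is wrong.

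The irony is that your opening observation is the right one and the repair is short. Taking adjoints of the hypothesis (all of $Q$, $P$, $Z$, $A^{1/2}$ are Hermitian) gives $A^{1/2}PZQ=0$, i.e.\ $\operatorname{ran}(PZQ)\subseteq\ker A^{1/2}=(\operatorname{ran}A^{1/2})^\perp$. On the other hand, the matrix $Z$ of \cite[Lemma~4.1]{tES2011} is built as $g(A)\,\tilde C\,g(A)$ with $g(t)=t^{-1/2}$ for $t>0$ and $g(0)=0$, so $\operatorname{ran}Z(x)\subseteq\operatorname{ran}A^{1/2}(x)$; together with $\operatorname{ran}Q(x)\subseteq\operatorname{ran}A^{1/2}(x)$ this shows $PZQ=ZQ-QZQ$ also has range inside $\operatorname{ran}A^{1/2}(x)$. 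Hence $PZQ=0$ a.e., i.e.\ $ZQ=QZQ$, and taking adjoints gives $QZ=QZQ=ZQ$ --- your final algebraic step, which is fine. This is essentially the argument inside the proof of \cite[Corollary~4.3]{tES2011} that the paper cites in lieu of a proof. Note that the statement is not provable from the hypothesis by matrix algebra alone (take $A^{1/2}=0$ with arbitrary non-commuting $Q$ and $Z$), so the two range inclusions tying $Q$ and $Z$ to $\operatorname{ran}A^{1/2}$ are exactly the external input one must get right.
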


The following result generalises~\cite[Corollary~4.3]{tES2011}.
\begin{proposition}\label{prop:as-sect}
The following statements are equivalent.
\begin{romanenum}
\item\label{en:as-sect} $\form{a}_\sing$ is sectorial.
\item\label{en:QZcommute}  $QZ=ZQ$ a.e.
\item\label{en:ar-form} For all $u,v\in\formdom{a}$ one has
\begin{align*}
    \form{a}_\reg(u,v) &= \scalar{(I+iZ)(I-Q)A^{1/2}\nabla{u}}{(I-Q)A^{1/2}\nabla{v}} \\
        &\qquad {}+ \scalar[b]{\conj{X}^{\,\mathdcl{t}}(I-Q)A^{1/2}\nabla u}{v} + \scalar[b]{u}{\conj{Y}^{\,\mathdcl{t}}(I-Q)A^{1/2}\nabla v} \\
        &\qquad {}- \scalar[b]{(\conj{X}^{\,\mathdcl{t}}Q(I+iZ)^{-1}QY)u}{v} + \scalar{c_0u}{v}.
\end{align*}
\item\label{en:Tpi2} For all $u\in\formdom{a}$ one has 
\[
    T\pi_2\Phi(u) = \bigl(0,-\tfrac{1}{2}uQZQ(X+Y)+\tfrac{i}{2}uQ(X-Y)\bigr).
\]
\item\label{en:aps-sect} $\form{a}^\mathdcl{p}_\sing$ is sectorial.
\end{romanenum}
\end{proposition}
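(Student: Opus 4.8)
The plan is to establish the equivalences by a cycle together with a couple of side implications, exploiting the formulas \eqref{eq:ar-formula}, \eqref{eq:as-formula} and the characterisation in Lemma~\ref{lem:as-sect-char}. A natural order is \ref{en:as-sect}~$\Leftrightarrow$~\ref{en:aps-sect}, then \ref{en:aps-sect}~$\Rightarrow$~\ref{en:QZcommute}~$\Rightarrow$~\ref{en:ar-form}~$\Rightarrow$~\ref{en:Tpi2}~$\Rightarrow$~\ref{en:aps-sect}, closing the loop; the equivalence of \ref{en:as-sect} with the rest then follows from \ref{en:as-sect}~$\Leftrightarrow$~\ref{en:aps-sect}.

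First I would treat \ref{en:as-sect}~$\Leftrightarrow$~\ref{en:aps-sect}. By Lemma~\ref{lem:as-sect-char}, sectoriality of $\form{a}_\sing$ is equivalent to the boundedness estimate \eqref{eq:est8}, and from the explicit formula $T\pi_2(u,w)=\bigl(0,QZ(I-Q)w-\tfrac12 uQZQ(X+Y)+\tfrac i2 uQ(X-Y)\bigr)$ obtained in the proof of Theorem~\ref{thm:formula}, applied with $(u,w)=\Phi(u)=(u,A^{1/2}\nabla u)$, one gets
\[
    T\pi_2\Phi(u) = \bigl(0, QZ(I-Q)A^{1/2}\nabla u - \tfrac12 uQZQ(X+Y) + \tfrac i2 uQ(X-Y)\bigr).
\]
Since $X$, $Y$, $Q$, $Z$ are all bounded, the two zeroth-order terms are automatically $O(\norm{u}_H)$, so \eqref{eq:est8} holds if and only if $\norm{QZ(I-Q)A^{1/2}\nabla u}_{(\Ltwo)^d}\le M'\norm{u}_H$ for all $u$. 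But applying the same Lemma~\ref{lem:as-sect-char} reasoning to $\form{a}^\mathdcl{p}$, for which $X=Y=c_0=0$, the corresponding operator $T^\mathdcl{p}\pi_2^\mathdcl{p}\Phi(u)$ equals exactly $(0,QZ(I-Q)A^{1/2}\nabla u)$, so sectoriality of $\form{a}^\mathdcl{p}_\sing$ is equivalent to the very same estimate. This gives \ref{en:as-sect}~$\Leftrightarrow$~\ref{en:aps-sect}; I should double-check that $Q$ is the same projection-valued multiplier for $\form{a}$ and $\form{a}^\mathdcl{p}$, which holds because Condition~(L)/(B) is a condition on the form domain and the highest-order coefficients, both shared by $\form{a}$ and $\form{a}^\mathdcl{p}$.

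For \ref{en:aps-sect}~$\Rightarrow$~\ref{en:QZcommute}: by \cite[Corollary~4.3]{tES2011} sectoriality of the pure second-order singular part is equivalent to $QZ=ZQ$ a.e., or alternatively one argues directly: the estimate $\norm{QZ(I-Q)A^{1/2}\nabla u}\le M'\norm{u}_H$ for all $u\in\formdom{a}$ forces $QZ(I-Q)A^{1/2}=0$ a.e.\ (this is precisely the step in the proof of \cite[Corollary~4.3]{tES2011}, using that $\nabla$ is unbounded on $\formdom{a}$ modulo $L^2$), and then Lemma~\ref{lem:QZImQA} yields $QZ=ZQ$. For \ref{en:QZcommute}~$\Rightarrow$~\ref{en:ar-form}: assuming $QZ=ZQ$ a.e., one simplifies $W=Q(I+iQZQ)^{-1}Q$; since $QZ=ZQ$, $QZQ=ZQ=QZ$ and $W=Q(I+iZ)^{-1}Q$ (using $P=I-Q$, $PZ=ZP$, so $(I+iQZQ)$ restricted to the range of $Q$ is just $Q(I+iZ)Q$). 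One then substitutes $WZ=ZW$, $W^*=Q(I-iZ)^{-1}Q$, $PW=WP=0$ into \eqref{eq:genformula} and collects terms: $I+iZ+ZWZ$ restricted by $P$ becomes $(I+iZ)P$ since $PZWZP=0$; the first-order coefficient $\conj X^{\,\mathdcl t}(I-iWZ)P=\conj X^{\,\mathdcl t}P$, likewise for the $Y$ term; and $\conj X^{\,\mathdcl t}WY=\conj X^{\,\mathdcl t}Q(I+iZ)^{-1}QY$. This is a routine but careful matrix computation.

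For \ref{en:ar-form}~$\Rightarrow$~\ref{en:Tpi2}: comparing the formula in \ref{en:ar-form} with the general \eqref{eq:genformula} (equivalently with \eqref{eq:ar-formula}), one reads off that the two formulas agree for all $u,v$ if and only if $QZ(I-Q)A^{1/2}=0$ a.e.\ — essentially because the discrepancy between $W$ and $Q(I+iZ)^{-1}Q$, and between $(I-iWZ)P$ and $P$, is governed by $QZP A^{1/2}$ when tested against gradients — and once $QZ(I-Q)A^{1/2}=0$ a.e., the displayed expression for $T\pi_2\Phi(u)$ in \ref{en:Tpi2} follows immediately from the formula for $T\pi_2$ above. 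Finally \ref{en:Tpi2}~$\Rightarrow$~\ref{en:aps-sect}: the stated form of $T\pi_2\Phi(u)$ says precisely that $QZ(I-Q)A^{1/2}\nabla u=0$ in $L^2$ for all $u\in\formdom{a}$, hence $T^\mathdcl{p}\pi_2^\mathdcl{p}\Phi(u)=0$, so \eqref{eq:est8} holds trivially for $\form{a}^\mathdcl p$ and Lemma~\ref{lem:as-sect-char} gives sectoriality of $\form{a}^\mathdcl{p}_\sing$. The main obstacle is the bookkeeping in \ref{en:QZcommute}~$\Rightarrow$~\ref{en:ar-form}: one must be sure that under $QZ=ZQ$ all the $W$-dependent terms in \eqref{eq:genformula} collapse correctly, in particular verifying $PW=0$, $W^*P=0$, $WZ=ZW$ and $\conj X^{\,\mathdcl t}WY=\conj X^{\,\mathdcl t}Q(I+iZ)^{-1}QY$, so that the three first-order and zeroth-order coefficients take the claimed reduced form; the measure-theoretic step "$\norm{QZ(I-Q)A^{1/2}\nabla u}\le M'\norm{u}_H$ for all $u\in\formdom{a}$ implies $QZ(I-Q)A^{1/2}=0$ a.e." is borrowed verbatim from the proof of \cite[Corollary~4.3]{tES2011} and need not be reproved.
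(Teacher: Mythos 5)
Your proposal is correct in substance and uses the same ingredients as the paper (Lemma~\ref{lem:as-sect-char}, the explicit formula for $T\pi_2$, Lemma~\ref{lem:QZImQA}, \eqref{eq:ar-formula}/\eqref{eq:genformula}, and \cite[Corollary~4.3]{tES2011}), but you arrange the implications differently. The paper proves \ref{en:as-sect}$\Rightarrow$\ref{en:QZcommute} directly, redoing the oscillating-test-function computation $u_\lambda=e^{i\lambda x\cdot\xi}\tau$ with the lower-order terms present, and then gets \ref{en:QZcommute}$\Leftrightarrow$\ref{en:aps-sect} and \ref{en:ar-form}$\Rightarrow$\ref{en:QZcommute} by citing \cite{tES2011}; the remaining implications \ref{en:QZcommute}$\Rightarrow$\ref{en:ar-form}, \ref{en:QZcommute}$\Rightarrow$\ref{en:Tpi2} and \ref{en:Tpi2}$\Rightarrow$\ref{en:as-sect} are as in your proposal. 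You instead prove \ref{en:as-sect}$\Leftrightarrow$\ref{en:aps-sect} first, by observing that the zeroth-order contributions $-\tfrac12 uQZQ(X+Y)+\tfrac{i}{2}uQ(X-Y)$ to $T\pi_2\Phi(u)$ are controlled by $\norm{u}_H$, so that \eqref{eq:est8} for $\form{a}$ and for $\form{a}^\mathdcl{p}$ are both equivalent to $\norm{QZ(I-Q)A^{1/2}\nabla u}\le M'\norm{u}_H$; this cleanly reduces the hard analytic step to the pure second-order case already settled in \cite{tES2011}, so the oscillating-function argument need not be repeated. Your checks that $Q$ and $V_\sing$ are shared by $\form{a}$ and $\form{a}^\mathdcl{p}$, and the simplification of $W$ to $Q(I+iZ)^{-1}Q$ under $QZ=ZQ$, are exactly what is needed and are correct.

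The one soft spot is your step \ref{en:ar-form}$\Rightarrow$\ref{en:Tpi2}: the claim that the formula in \ref{en:ar-form} agreeing with \eqref{eq:genformula} ``reads off'' as $QZ(I-Q)A^{1/2}=0$ a.e.\ is not immediate, because the first- and zeroth-order terms of the two formulas also differ a priori, so one must first separate the orders (again via oscillating test functions, dividing by $\lambda^2$) to isolate the second-order identity $A^{1/2}PZWZPA^{1/2}=0$, and then use that $\Re\bigl(\eta^*(I+iQZQ)^{-1}\eta\bigr)\ge(1+\norm{Z}_\infty^2)^{-1}\norm{\eta}^2$ with $\eta=QZPA^{1/2}\xi$ to conclude $QZPA^{1/2}=0$. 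The paper sidesteps this by routing \ref{en:ar-form}$\Rightarrow$\ref{en:QZcommute} through \cite[Corollary~4.3~`(iii)$\Rightarrow$(ii)']{tES2011}; if you keep your cycle, you should either spell out this separation argument or likewise invoke that corollary rather than leave it at ``one reads off''.
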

\begin{proof}
\begin{parenum}
\item[`\ref{en:as-sect}$\Rightarrow$\ref{en:QZcommute}':]
Let $\tau\in\Cinfc$ and $\xi\in\RR^d$.
For all $\lambda>0$ define $u_\lambda\in\Cinfc$ by $u_\lambda = e^{i\lambda x\cdot\xi}\tau(x)$. 
Since $\form{a}_\sing$ is sectorial, it follows from~\eqref{eq:est8} that there exists an $M>0$
such that
\[
    \form{\tilde{h}}(T\pi_2\Phi(u_\lambda),T\pi_2\Phi(u_\lambda)) \le M\norm{u_\lambda}^2_{\Ltwo}.
\]
Expanding the terms using~\eqref{eq:form-ht} and~\eqref{eq:Tpi2} gives
\[
    \int_\Omega\abs{QZ(I-Q)A^{1/2}(i\lambda\tau\xi + \nabla\tau) + \tfrac{1}{2}\tau QZQ(X+Y)+\tfrac{i}{2}\tau Q(X-Y)}^2 \le M\norm{\tau}^2_{\Ltwo}.
\]
Dividing both sides by $\lambda^2$ and letting $\lambda\to\infty$ shows that $\tau QZ(I-Q)A^{1/2}\xi=0$ a.e.
This implies that $QZ(I-Q)A^{1/2}=0$ a.e. Now it follows from Lemma~\ref{lem:QZImQA} that $QZ=ZQ$ a.e.
\item[`\ref{en:QZcommute}$\Rightarrow$\ref{en:ar-form}':] 
This follows from~\eqref{eq:ar-formula}.
\item[`\ref{en:ar-form}$\Rightarrow$\ref{en:QZcommute}':] 
This follows from~\cite[Corollary~4.3~`(iii)$\Rightarrow$(ii)']{tES2011} applied to $\form{a}^\mathdcl{p}$.
\item[`\ref{en:QZcommute}$\Rightarrow$\ref{en:Tpi2}':]
This is immediate, using~\eqref{eq:Tpi2}.
\item[`\ref{en:Tpi2}$\Rightarrow$\ref{en:as-sect}':]
This is a consequence of Lemma~\ref{lem:as-sect-char}.
\item[`\ref{en:QZcommute}$\Leftrightarrow$\ref{en:aps-sect}':]
This follows from~\cite[Corollary~4.3~`(ii)$\Leftrightarrow$(iv)']{tES2011}.
\qedhere
\end{parenum}
\end{proof}
\begin{remark}
Suppose that $\form{a}_\sing$ is sectorial. Then by Proposition~\ref{prop:as-sect}~`\ref{en:as-sect}$\Rightarrow$\ref{en:QZcommute}' and~\eqref{eq:as-formula} 
it follows that
\[
    \form{a}_\sing(u,v) = \form{a}^\mathdcl{p}_\sing(u,v)
    + \scalar[b]{QA^{1/2}\nabla u}{vX} + \scalar[b]{uY}{QA^{1/2}\nabla v} 
    + \scalar{uWY}{vX}
\]
for all $u,v\in\formdom{a}$. 
\end{remark}

Next we characterise when the regular part of the real part equals the real part of the regular part.
\begin{lemma}
We have $(\formreal\form{a})_\reg=\formreal(\form{a}_\reg)$ if and only if both $QZ=ZQ$ and $(I+iZ)QX=(I-iZ)QY$ a.e.
\end{lemma}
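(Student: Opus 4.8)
The plan is to use the criterion from \cite[Proposition~3.1]{tES2011}, which says that $(\formreal\form{a})_\reg=\formreal(\form{a}_\reg)$ if and only if $T\pi_2=0$. So the statement reduces to showing that $T\pi_2=0$ as an operator on $V$ if and only if both $QZ=ZQ$ and $(I+iZ)QX=(I-iZ)QY$ hold almost everywhere. The explicit formula \eqref{eq:Tpi2} for $T\pi_2$ is the main tool:
\[
    T\pi_2(u,w) = \bigl(0, QZ(I-Q)w - \tfrac{1}{2}uQZQ(X+Y) + \tfrac{i}{2}uQ(X-Y)\bigr)
\]
for all $(u,w)\in V$.

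First I would prove the easy direction. If $QZ=ZQ$ a.e., then $QZ(I-Q)=QZ-QZQ=QZ-ZQ\cdot Q=QZ-Q\cdot ZQ$; more directly $QZ(I-Q) = QZ - QZQ = ZQ - ZQ = 0$ using $Q^2 = Q$, so the first term in \eqref{eq:Tpi2} vanishes. For the remaining terms, with $QZ=ZQ$ one has $QZQ = ZQ$, so $-\tfrac12 u QZQ(X+Y) + \tfrac i2 u Q(X-Y) = \tfrac12 u Q\bigl(-Z(X+Y) + i(X-Y)\bigr) = \tfrac i2 u Q\bigl((I+iZ)X - (I-iZ)Y\bigr)$ after regrouping. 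Hence this vanishes exactly when $(I+iZ)QX = (I-iZ)QY$ a.e. (using again $QZ=ZQ$ to move $Q$ past $Z$), giving $T\pi_2=0$.

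For the converse, suppose $T\pi_2=0$. Applying $T\pi_2$ to elements of the form $\Phi(u_\lambda)$ with $u_\lambda = e^{i\lambda x\cdot\xi}\tau(x)$ and running the same scaling argument as in the proof of Proposition~\ref{prop:as-sect} `\ref{en:as-sect}$\Rightarrow$\ref{en:QZcommute}' --- dividing by $\lambda^2$ and letting $\lambda\to\infty$ --- forces the highest-order part $QZ(I-Q)A^{1/2}=0$ a.e., and then Lemma~\ref{lem:QZImQA} yields $QZ=ZQ$ a.e. With commutativity in hand, the surviving part of $T\pi_2$ is the zeroth-order term $\tfrac i2 uQ\bigl((I+iZ)X-(I-iZ)Y\bigr)$ computed above, and setting this to zero (testing against $u\in\Cinfc$, which is dense) gives $(I+iZ)QX=(I-iZ)QY$ a.e. I expect the main obstacle to be the bookkeeping in the scaling argument: one must be careful that $\Phi(u_\lambda)$ indeed lies in $V$ and that the cross terms between the $i\lambda\tau\xi$ part and the lower-order pieces are controlled, but this is exactly the computation already carried out in Proposition~\ref{prop:as-sect}, so it transfers with only cosmetic changes.
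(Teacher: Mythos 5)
Your proposal is correct and follows essentially the same route as the paper: reduce to $T\pi_2=0$ via \cite[Proposition~3.1]{tES2011} and then read off both conditions from the explicit formula~\eqref{eq:Tpi2}, the algebraic rearrangement $iQ(X-Y)=QZQ(X+Y)\Leftrightarrow(I+iZ)QX=(I-iZ)QY$ (given $QZ=ZQ$) being the same in both. The only difference is organisational: for the hard direction the paper observes that $T\pi_2=0$ makes $\form{a}_\sing$ sectorial and then cites the implications `\ref{en:as-sect}$\Rightarrow$\ref{en:QZcommute}' and `\ref{en:as-sect}$\Rightarrow$\ref{en:Tpi2}' of Proposition~\ref{prop:as-sect}, whereas you re-run the underlying scaling argument directly, which amounts to the same computation.
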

\begin{proof}
By~\cite[Proposition~3.1]{tES2011}, we know that $(\formreal\form{a})_\reg=\formreal(\form{a}_\reg)$ if and only if $T\pi_2 = 0$.
\begin{asparaenum}
\item[`$\Rightarrow$':]
Suppose $T\pi_2=0$. 
By Lemma~\ref{lem:as-sect-char} the form $\form{a}_\sing$ is sectorial.
Therefore it follows from Proposition~\ref{prop:as-sect}~`\ref{en:as-sect}$\Rightarrow$\ref{en:QZcommute}' and~`\ref{en:as-sect}$\Rightarrow$\ref{en:Tpi2}'
that $QZ=ZQ$ and $iQ(X-Y) = QZQ(X+Y)$ a.e.
Now the claim follows by rearranging the terms.
\item[`$\Leftarrow$':]
After rearranging terms, we obtain $iQ(X-Y)=QZQ(X+Y)$ a.e. and $QZ=ZQ$ a.e.
Hence it follows directly from~\eqref{eq:Tpi2} that $T\pi_2=0$.\qedhere
\end{asparaenum}
\end{proof}

We finish with an example that shows that $\form{a}_\sing$ can be sectorial while at the same time $(\formreal\form{a})_\reg\ne \formreal(\form{a}_\reg)$.
Moreover, the example shows that if $\gamma$ is a vertex for $\form{a}$, then $\gamma$ needs not to be a vertex for $\form{a}_\sing$.
Both phenomena do not occur for differential sectorial forms that are purely of second order. 
\begin{example}\label{ex:vertex-shift}
Let $K\subset [0,1]$ be a compact set with empty interior and strictly positive Lebesgue measure $\meas{K}$.
Consider the form $\form{a}\colon\Hone[\RR]\times\Hone[\RR]\to\CC$ given by
\[
    \form{a}(u,v) = \int_\RR\one_K u'\conj{v'} + \int_\RR\one_Ku'\conj{v} - \int_\RR\one_K u\conj{v'} + \int_\RR\one_K u\conj{v}.
\]
Then $\form{a}$ is sectorial in $\Ltwo[\RR]$. More precisely,
\[
    (\formreal\form{a})(u,v)=\int_\RR\one_K u'\conj{v'} + \int_\RR\one_K u\conj{v}
\]
and
\[
    \abs{\Im\form{a}(u,u)}\le \Re\form{a}(u,u) 
\]
for all $u,v\in\Hone[\RR]$. So $\form{a}$ has vertex $0$.
It follows from~\cite[Lemma~4.4]{tES2011} that we may take $Q=\one_K$.
Clearly $Z=0$, so $\form{a}_\sing$ is sectorial by Proposition~\ref{prop:as-sect}.
Using the formula in Proposition~\ref{prop:as-sect}\,\ref{en:ar-form}, we obtain
\[
    \form{a}_\reg(u,v)=2\int_\RR \one_K u\conj{v}
\]
and hence
\[
    \form{a}_\sing(u,v)= \int_\RR\one_K u'\conj{v'} + \int_\RR\one_Ku'\conj{v} - \int_\RR\one_K u\conj{v'} - \int_\RR\one_K u\conj{v}
\]
for all $u,v\in\Hone[\RR]$. It is easily seen that 
\[
    \formreal(\form{a}_\reg) = \form{a}_\reg \ne \tfrac{1}{2}\form{a}_\reg = (\formreal\form{a})_\reg.
\]
Now let $u\in\Cinfc[\RR]$ be such that $\restrict{u}{[0,1]}=1$. Then $\Re\form{a}_\sing(u,u)=-\meas{K}< 0$.
This shows that $0$ is not a vertex of $\form{a}_\sing$.

Finally, if $\form{b}\colon\Hone[\RR]\times\Hone[\RR]\to\CC$ is the form
without zeroth-order term given by
\[
    \form{b}(u,v) = \int_\RR\one_K u'\conj{v'} + \int_\RR\one_Ku'\conj{v} - \int_\RR\one_K u\conj{v'},
\]
then
\[
    \form{b}_\reg(u,v)=\int_\RR \one_K u\conj{v}
\]
for all $u,v\in\Hone[\RR]$ and $\form{b}_\reg$ contains a non-trivial zeroth-order term.
\end{example}

\subsection*{Acknowledgements} 
We would like to thank El-Maati Ouhabaz for raising the question whether 
the formula for the regular part can be extended to allow lower-order terms.
The authors thank the referee for his comments which improved the presentation of the paper.
Part of this work is supported by the Marsden Fund Council from Government
funding, administered by the Royal Society of New Zealand.


\end{document}